\documentclass[11pt]{article}
\usepackage{geometry}
\geometry{letterpaper}

\usepackage{graphicx}
\usepackage{indentfirst} 
\usepackage{caption}
\usepackage{booktabs}
\usepackage{framed}
\usepackage{subfigure}
\usepackage{xcolor}
\usepackage{amsfonts}
\usepackage{mathrsfs}
\usepackage{amsmath}
\usepackage{amsthm}
\usepackage{amssymb}
\def\ds{\displaystyle}
\def\={\buildrel \triangle \over =}
%
%
\def\a{\alpha}

\def\d{\delta}
\def\e{\varepsilon}

\def\l{\lambda}

\def\f{\varphi}

\def\ns{\noalign{\ss} }

%
%

\def\D{\Delta}

\def\ms{\medskip}

\def\q{\quad}
\def\qq{\qquad}

\def\dbF{{\mathbb{F}}}

\def\dbM{{\mathbb{M}}}

\def\dbP{{\mathbb{P}}}

\def\mE{{\mathbb{E}}}

\def\3n{\negthinspace \negthinspace \negthinspace }
\def\2n{\negthinspace \negthinspace }
\def\1n{\negthinspace }

\def\Ma{\textbf{Mass}}
\def\St{\textbf{Stiff}}
\def\Md{\textbf{MD}}
\def\Mg{\textbf{MG}}
\def\Au{\textbf{A}}
\def\Ay{\textbf{B}}

\def\cA{{\cal A}}
\def\cB{{\cal B}}

\def\cF{{\cal F}}

%
%
\def\pa{\partial}

\def\cd{\cdot}
\def\cds{\cdots}

\def\div{\hbox{\rm div$\,$}}

\def\|{||}
\def\({\Big (}
\def\){\Big )}
\def\[{\Big[}
\def\]{\Big]}
\def\be{\begin{equation*}}
\def\bel{\begin{equation}\label}
\def\ee{\end{equation}}
\def\bt{\begin{theorem}}
\def\bcd{\begin{condition}}
\def\ecd{\end{condition}}
\def\et{\end{theorem}}
\def\bc{\begin{corollary}}
\def\ec{\end{corollary}}
\def\bde{\begin{definition}}
\def\ede{\end{definition}}
\def\bl{\begin{lemma}}
\def\el{\end{lemma}}
\def\bp{\begin{proposition}}
\def\ep{\end{proposition}}
\def\br{\begin{remark}}
\def\er{\end{remark}}
\def\ba{\begin{array}}
\def\ea{\end{array}}
\def\ed{\end{document}}
\def\ns{\noalign{\ms}}
\def\ds{\displaystyle}

\def\Om{\Omega}

\newtheorem{lemma}{Lemma}[section]
\newtheorem{remark}{Remark}[section]
\newtheorem{example}{Example}[section]
\newtheorem{theorem}{Theorem}[section]
\newtheorem{corollary}{Corollary}[section]

\newtheorem{definition}{Definition}[section]
\newtheorem{proposition}{Proposition}[section]
\newtheorem{condition}{Condition}[section]
\numberwithin{equation}{section}
\allowdisplaybreaks



\title{\bf Determination the Solution of a Stochastic Parabolic Equation by the Terminal Value}
\author{Fangfang Dou\thanks{Corresponding author. School of Mathematical Sciences, University of Electronic Science and Technology of China, Chengdu, China. Email: fangfdou@uestc.edu.cn.} \quad and \quad Wanli Du\thanks{School of Mathematical Sciences, University of Electronic Science and Technology of China, Chengdu, China.}}
\date{}                       

\begin{document}
\maketitle
\begin{abstract}
This paper studies the inverse problem of determination the history for a stochastic diffusion process, by means of the value  at
the final time $T$. By establishing a new Carleman estimate, the conditional stability of the problem is proven. Based on the idea of Tikhonov method, a regularized solution is proposed. The analysis of the existence and uniqueness of the regularized solution, and proof for error estimate under an a-proior assumption are present. Numerical verification of the regularization, including numerical algorithm and examples are also illustrated.
\end{abstract}
\noindent\textbf{Key words.}  stochastic parabolic equation,   Carleman estimate, conditional stability, regularization method

\section{Introduction}

The stochastic parabolic equations are widely used to describe many diffusion processes perturbed by stochastic noises, such as  the evolution of the density of a bacteria population, the propagation of an electric potential in a neuron, etc., (e.g., \cite{dPZ2014,D1972,K2008,LZ2021S}). In this paper, we study an inverse problems for stochastic parabolic equations, i.e., determining the solution  from the terminal measurement. To be more precisely,  we first introduce some notations.

Let $T>0, G \subset \mathbb{R}^{n}(n \in \mathbb{N})$ be a given bounded domain with a $C^{2}$ boundary $\Gamma .$  Put
$
Q \triangleq(0, T) \times G$ and $\Sigma \triangleq(0, T) \times \Gamma$.
 
Let $\left(\Omega, \mathcal{F},\left\{\mathcal{F}_{t}\right\}_{t \geqslant 0}, \dbP\right)$ be a complete filtered probability space on which a one dimensional standard Brownian motion $W(\cd)$ is defined.

Let $H$ be a Banach space. Denote by $L^2_{\cF_t}(\Om;H)$ ($t\geq 0$) the space of all $H$ -valued random variables $\xi$ satisfying $\mE|\xi|_H^2<\infty$; by $L_{\mathbb{F}}^{2}(0, T ; H)$ the space  of all $H$ -valued $\left\{\mathcal{F}_{t}\right\}_{t \geqslant 0}$ -adapted processes $X(\cdot)$ such that $\mathbb{E}(|X(\cdot)|_{L^{2}(0, T ; H)}^{2})<\infty$; by $L_{\mathbb{F}}^{\infty}(0, T ; H)$ the space of all $H$ -valued $\left\{\mathcal{F}_{t}\right\}_{t \geqslant 0}$ -adapted bounded processes; and by $L_{\mathbb{F}}^{2}(\Omega ; C([0, T] ; H))$ the space  of all $H$ -valued $\left\{\mathcal{F}_{t}\right\}_{t \geqslant 0}$-adapted processes $X(\cdot)$ satisfying that $\mathbb{E}(|X(\cdot)|_{C(0, T ; H)}^{2})<\infty $. All these spaces are Banach spaces with the canonical norms (e.g.,\cite[Chapter 2]{LZ2021S}).

Consider the following stochastic parabolic equation:
\begin{equation}\label{p1}
\left\{\begin{array}{ll}\ds\mathrm{d} u-\sum_{i, j=1}^{n}(a^{i j} u_{i})_j \mathrm{d} t=\left( b_{1}\cd\nabla u +b_{2} u+f\right) \mathrm{d} t+\left(b_{3} u+g\right) \mathrm{d} W(t) & \text { in } Q, \\ \ns\ds u=0 & \text { on } \Sigma, \\ \ns\ds u(0)=u_{0}, & \text { in }G,
\end{array}
\right.
\end{equation}
where $u_{0} \in L^{2}_{\mathcal{F}_{0}}\left(\Omega; L^{2}(G)\right)$ and $u_i=\frac{\partial u}{\partial x_i}$ for $i=1,\cds,n$.

Throughout this paper, we make the following assumptions on the coefficients $
a^{i j}: \Omega \times Q \rightarrow \mathbb{R}, i, j=1,2, \ldots, n
$:
\begin{itemize}
\item[(H1)] $a^{i j} \in L_{\mathbb{F}}^{2}\left(\Omega ; C^{1}\left([0, T] ; W^{2, \infty}(G)\right)\right)$ and $a^{i j}=a^{j i}$ for $i, j=1,2, \ldots, n$.

\item[(H2)] There is a constant $\sigma>0$ such that
$$
\sum_{i, j=1}^{n} a^{i j}(\omega, t, x) \xi^{i} \xi^{j} \geqslant \sigma|\xi|^{2}, \quad(\omega, t, x, \xi) \equiv\left(\omega, t, x, \xi^{1}, \ldots, \xi^{n}\right) \in \Omega \times Q \times \mathbb{R}^{n}.
$$
\end{itemize}
Let other coefficients and source terms in the equation of system \eqref{p1} satisfy
\begin{equation*}\!\!\!\!\!\!\!\!\!\text{(H3)}
\quad\begin{array}{l}
\ds b_{1} \in L_{\mathbb{F}}^{\infty}\left(0, T ; W^{1,\infty}\left(G ; \mathbb{R}^{n}\right)\right), b_{2} \in L_{\mathbb{F}}^{\infty}\left(0, T; L^{\infty}(G)\right), b_{3} \in L_{\mathbb{F}}^{\infty}\left(0, T ; W^{1, \infty}(G)\right), \\
\ns\ds f \in L_{\mathbb{F}}^{2}\left(0, T ; L^{2}(G)\right) \  \text { and } \  g \in L_{\mathbb{F}}^{2}\left(0, T ; H^{1}_0(G)\right).
\end{array}
\end{equation*}

For readers' convenience, let us first recall the definition of the weak and strong solution to \eqref{p1}.

\begin{definition}\label{weakln} A process $u \in L^2_\dbF(\Om;C([0,T];L^2(G)))\cap L^2_\dbF(0,T;H^1_0(G))$ is said to be a strong solution of
	equation \eqref{p1} if for any $t \in[0, T]$ and $\f\in H_0^1(G)$, it holds that
	\begin{equation*}\label{weaksol}
		\begin{array}{ll} 
			\ds \int_G u(t) \f dx\!=\3n &\ds\int_G\! u_{0} \f dx+\!\int_{0}^{t}\!\int_G\!\Big(\sum_{i, j=1}^{n} a^{i j} u_{i} \f_{j}\!-\! \div b_{1} u \f \!- \! u  b_{1} \cd\nabla\f u \!+\!b_{2} u \f\! +\! f \f \Big) \mathrm{d}x\mathrm{d} s \\
			\ns&\ds +\int_{0}^{t} \int_{G}\left(b_{3}  u +g \right)\f \mathrm{d}x\mathrm{d} W(s), \quad \dbP\mbox{\rm-a.s.}
		\end{array}
	\end{equation*}
\end{definition}

\begin{definition}\label{strongsoln} A process $u \in L_{\mathbb{F}}^{2}\left(\Omega ; C\left([0, T] ; H^{2}(G) \cap H_{0}^{1}(G)\right)\right)$ is said to be a strong solution of
equation \eqref{p1} if for any $t \in[0, T]$ it holds that
\begin{equation}\label{strongsol}
\begin{array}{c}
\ds u(t)=u_{0}+\int_{0}^{t}\[-\sum_{i, j=1}^{n}\left(a^{i j} u_{i} \right)_{j}+ b_{1} \cd\nabla u +b_{2} u +f \] \mathrm{d} s \\
\ns\ds +\int_{0}^{t} \left(b_{3} u +g \right) \mathrm{d}W(s) \mbox{ in } L^2(G), \q \dbP\mbox{\rm-a.s.}
\end{array}
\end{equation}
\end{definition}

Under (H1)--(H3), by the classical well-posedness result for stochastic parabolic equations, we know that \eqref{p1} admits a unique weak solution $u$ (e.g. \cite{Krylov1994}). Furthermore, if $u_{0}\in H^{2}(G) \cap H_{0}^{1}(G)$,
we know that \eqref{p1} admits a unique strong solution $u$ (e.g. \cite{Krylov1994}).

The inverse problem ({\bf IPD}) associated to the
equation \eqref{p1} is as follows.

\vspace{0.1cm}
\begin{itemize}
  	
	\item {\bf Conditional Stability}. Assume that two
	solutions $u$ and $\hat u$ (to the equation
	\eqref{p1}) are given. Let $u(T)$ and
	$ \hat u(T)$ be the
	corresponding terminal values. Can we find a
	positive constant $C$ such that
	\begin{equation}\label{1.18-eq1}
	|u-\hat u| \leq C|\!| u(T)- \hat u|\!|,
	\end{equation}
	with appropriate norms in both sides?
	
	\vspace{0.1cm}
	
	\item {\bf Reconstruction}. Is it possible to
	reconstruct $u$   from the
	terminal value $u(T)$?
	
\end{itemize}
Here and in the rest of this paper, we use $C$
to denote a generic positive constant depending
on $G$, $T$ and $a^{ij}$($i,j=1,\cds,n$) (unless otherwise
stated), which may change from line to line.

\begin{remark}
It is well known that the inverse problem ({\bf IPD}) is ill-posed: small errors in data
may cause huge deviations in solutions. Fortunately, if we assume an a priori bound
for $u(\cd,0)$, then we can restore the stability. This is the reason we consider  conditional stability.
\end{remark} 

Inverse problem  in the above type   is studied extensively for deterministic parabolic equations (see
\cite{KY2019IP,Yamamoto} and the rich references
therein). However, the stochastic case attracts
very little attention. To our best knowledge,
\cite{barbu2,L2012IP} are the only two published
papers addressing this topic.  In \cite{barbu2}, the author study the problem by transform the equation \eqref{p1} to a parabolic equation with random coefficients. Then they can obtain pathwisely a logarithmic convexity property, from which the uniqueness follows. To apply the strategy in \cite{barbu2}, one needs some further assumptions on $b_3$, such as $b_3$ is independent of $x$ (in \cite{barbu2}, the authors assume that $b_3=1$).  In \cite{L2012IP}, the author use a stochastic global Carleman estimate to study the inverse problem ({\bf IPD}). The weight function in that Carleman estimate is a double-exponential function. In such case, the constant $C$ in \eqref{1.18-eq1} will depend on the norm of $b_1$, $b_2$ and $b_3$ double-exponentially. In this paper, we establish a new Carleman estimate with an exponential weight function. Then we improve the conditional stability in \cite{L2012IP} (see Theorem \ref{reguest} for the detail). More importantly, both \cite{barbu2,L2012IP} only address the first question of  the inverse problem ({\bf IPD}). To our best knowledge, this paper is the first one addressing the reconstruction question of the inverse problem ({\bf IPD}).

\ms

As we said before, the main tool for establishing the conditional stability is a new global Carleman estimate for \eqref{p1}.   
Carleman estimates are widely applied to many inverse problems for deterministic partial differential equations (see \cite{BY2017book,FLZ2019book,KT2012book} and the rich references therein).  Recently, Carleman estimates are also introduced to solve inverse problems for stochastic partial differential equations. Particularly, we refer the readers to  \cite{Li2013,L2012IP,WCW2020IP,Y2017IP,Y2021JIIP} for some recent works on inverse problems of stochastic parabolic equations via Carleman estimates. 
In all the above mentioned papers, Carleman estimates are established by using two-layer weight functions.  
In this paper, we improve the Carleman estimates in the previous works for stochastic parabolic equation, i.e., establish a new Carleman estimate for stochastic parabolic equation by applying a one-layer weight function, which is first introduced in \cite{KY2019IP} for backward problem of deterministic heat equation. 
Since the solution of a stochastic parabolic equation is not differentiable with respect to the temporal variable, new difficulties occur in the study of inverse problems for stochastic parabolic equations. 
Thus, the Carleman estimate we obtained is not a trivial extension of \cite{KY2019IP} and other existing works, although the form of the Carleman estimate is similar to the one in \cite{KY2019IP}. 

\ms

Once the Carleman estimate is established, we employ Tikhonov regularization method to reconstruct the solution to \eqref{p1}. Tikhonov regularization method is a very useful tool to solve inverse and ill-posed problems for deterministic partial differential equations (e.g., \cite{LP2013,Schuster2012}). 
To the authors' knowledge, regularization methods for inverse problems of stochastic parabolic equations are rarely studied. The only published work  is \cite{CWC2021IPI}, in which  Cauchy problem for stochastic parabolic equations is studied by a Kaczmarz method.   In this paper, we propose a regularized solution by minimising a functional, which is giving based on the Tikhonov regularization.  The analysis of the existence and uniqueness of the regularized solution, and proof for error estimate between the regularized solution and the exact solution under an a-proior condition are also present. 

In this paper, we also give a numerical solution to the inverse problem  ({\bf IPD}), i.e., we numerically solve the reconstruction problem   by the proposed regularization method, with the combination of the conjugate gradient method to the Tikhonov type functional and Picard iteration.  We mention that, in numerical solving the forward-backward stochastic parabolic equation, we apply the idea given in \cite{DP2016SISC}, where the discretization is given based on the implicit Euler method for a temporal discretization and a least squares Monte Carlo method in combination with a stochastic gradient method. Compared with deterministic setting, this is also a topic far from well studied.  As far as we know, there is no published work  addressing that.  

The rest of the paper is organized as follow. 
We first establish a global Carleman estimate for the stochastic parabolic equation  \eqref{p1} in Section 2,  and prove the conditional stability for the inverse problem under an a-priori information in Section 3. 
A regularization solution based on the Tikhonov regularization method is proposed in Section 4. 
At last, we give numerical algorithm to the problem, and illustrate the approximations to several  examples in both one and two dimensional spatial domains in Section 5.

\section{Carleman estimate}

This section is devoted to establishing a Carleman estimate for \eqref{p1}. We first give the weight function. For $\l>0$, set
\begin{equation}
\psi=(t+1)^\l,\qq \f=e^\psi.
\end{equation}
The Carleman estimate is as follows:
\begin{theorem}\label{carleman}
For any $\e \in[0, T),$ there exists a $\l_{0}>0$ such that for all $\l \geqslant \l_{0},$ there holds that
\begin{align}\label{519eq3}
&\mathbb{E}\int_\e^T\int_G \l^2 (t+1)^{(\l-2)}v^2\mathrm{d} t\mathrm{d}x+\mathbb{E}\int_\e^T\int_G \l(t+1)^{-1}|\nabla v|^2\mathrm{d} t\mathrm{d}x \nonumber\\
\ns\leq&C\[\l(T\!+\!1)^{\l-1}\mathbb{E}\|v(T)\|^2_{L^2(G)}\!+\! \mathbb{E}\|v(\e)\|^2_{L^2(G)} \!+\! \mathbb{E}\! \int_{\e}^{T} \!\int_{G}\! \f^2(|\nabla g|^2\! +\!g^{2}\!+\!f^2) \mathrm{d} x \mathrm{d} t\].
\end{align}
\end{theorem}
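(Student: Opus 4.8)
The plan is to derive the Carleman estimate \eqref{519eq3} by the classical pointwise-weighting technique adapted to the It\^o setting. First I would introduce the weighted variable $v = u$ (the solution of \eqref{p1} restricted to $[\e,T]$; I keep the notation $v$ as in the statement) and set $w = \theta v$ with $\theta = \f^{-1} = e^{-\psi}$, or more symmetrically work directly with $v$ against the weight $\f^2$. The heart of the argument is a pointwise identity for the principal operator: writing the stochastic PDE as $\mathrm{d}v = \sum_{i,j}(a^{ij}v_i)_j\,\mathrm{d}t + (\text{lower order})\,\mathrm{d}t + (\text{noise})\,\mathrm{d}W$, one multiplies by a suitable first-order weighted expression (of the form $\ell_t v + $ a gradient term, where $\ell = \log\f = \psi$ plays the role of the large-parameter phase) and applies It\^o's formula to $\f^2 v^2$ and to $\f^2|\nabla v|^2$. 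The key point, and the place where the stochastic case genuinely differs from \cite{KY2019IP}, is that $\mathrm{d}(\f^2 v^2)$ produces an extra It\^o correction term $\f^2\,\mathrm{d}\langle v\rangle = \f^2 (b_3 v + g)^2\,\mathrm{d}t$ which has no deterministic analogue; this term must be absorbed using (H3) and the smallness of $b_3$ against the leading $\l^2(t+1)^{\l-2}v^2$ term for $\l$ large.

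Concretely, the steps I would carry out are: (1) Apply It\^o's formula to compute $\mathrm{d}\bigl(\f^2 v^2\bigr)$ and $\mathrm{d}\bigl(\l(t+1)^{\l-1}\f^2 v^2\bigr)$ or similar, keeping careful track of the drift terms generated by $\psi_t = \l(t+1)^{\l-1}$ and $\psi_{tt}$. (2) Integrate by parts in $x$ on the term coming from $\sum_{i,j}(a^{ij}v_i)_j$, using $v=0$ on $\Sigma$ to kill boundary integrals, which yields a positive quadratic form $\sum a^{ij}v_iv_j \geq \sigma|\nabla v|^2$ by (H2) together with cross terms involving $\nabla\f = \f\,\nabla\psi = 0$ (since $\psi$ depends only on $t$!) — this is exactly why the one-layer, time-only weight is so clean: the troublesome gradient-of-weight terms in $x$ simply vanish. (3) Collect the good terms: the drift from differentiating $\f^2$ in $t$ gives $2\psi_t\f^2 v^2 = 2\l(t+1)^{\l-1}\f^2 v^2$, and after dividing through by the (bounded on $[\e,T]$) factor $\f^2/\l$ and a further differentiation one extracts the $\l^2(t+1)^{\l-2}v^2$ and $\l(t+1)^{-1}|\nabla v|^2$ left-hand terms. (4) Take expectations: all $\mathrm{d}W$ integrals are martingales with zero mean, leaving only the It\^o-correction drift and the deterministic terms. (5) Bound the lower-order terms $b_1\cd\nabla v$, $b_2 v$, $b_3 v$ by Cauchy–Schwarz / Young, absorbing them into the left-hand side by choosing $\l_0$ large (using that $b_1,b_2,b_3$ are in $L^\infty_\dbF$ spaces by (H3), and crucially $\nabla g \in L^2$ so that terms with $\nabla(b_3 v) = \nabla b_3\, v + b_3\nabla v$ and $\nabla g$ are controlled — this is why $|\nabla g|^2$ appears on the right). (6) Handle the temporal endpoints: the fundamental theorem of calculus from the It\^o formula integration on $[\e,T]$ produces boundary contributions $\f^2(T)\mE\|v(T)\|^2$ and $\f^2(\e)\mE\|v(\e)\|^2$; since $\f^2(T) = e^{2(T+1)^\l}$ would be too large, one instead arranges the weighting so the $T$-endpoint comes with only a polynomial prefactor $\l(T+1)^{\l-1}$ — achieved by differentiating the combination $\l(t+1)^{\l-1}\f^2$ rather than $\f^2$ alone, exploiting that $\tfrac{d}{dt}\f^2 = 2\l(t+1)^{\l-1}\f^2$ so that one power of $\f^2$ is ``used up'' and the endpoint term is $\f^2(T)$-free after cancellation (this is the technical device from \cite{KY2019IP}).

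I would organize the write-up as: a pointwise It\^o-Bochner identity (Lemma), then integration over $Q_\e = (\e,T)\times G$, then taking $\mE$, then the absorption argument fixing $\l_0 = \l_0(G,T,a^{ij},\|b_1\|,\|b_2\|,\|b_3\|)$. The main obstacle I anticipate is the bookkeeping around the It\^o correction term: unlike the deterministic heat equation, applying $\mathrm d$ to any quadratic functional of $v$ generates $\mathrm d\langle v,v\rangle = (b_3 v+g)^2\,\mathrm dt$, and one must verify that after all integrations by parts this term (and its gradient analogue $\mathrm d\langle \nabla v,\nabla v\rangle$, which forces the $|\nabla g|^2$ on the right and requires $g\in H^1_0$) is genuinely lower-order relative to the gained $\l^2$-weighted terms — i.e. that the large parameter $\l$ beats the fixed $L^\infty$-norm of $b_3$. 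A secondary subtlety is that $v$ a priori lies only in $L^2_\dbF(\Om;C([0,T];L^2(G)))\cap L^2_\dbF(0,T;H^1_0(G))$ (weak solution), not the $H^2$ strong-solution class, so the formal integrations by parts and It\^o applications must be justified by a density/approximation argument (regularize $u_0$ and the data, apply the estimate to strong solutions, pass to the limit), which I would relegate to a remark or do at the level of the strong solution and invoke stability of \eqref{519eq3} under the relevant limits.
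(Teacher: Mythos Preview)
Your overall strategy---pointwise weighted identity, It\^o's formula, absorption of lower-order terms for large $\lambda$---matches the paper's. However, there is a basic misreading that propagates through steps (3)--(6): in the statement, $v$ is \emph{not} the solution $u$ but the already-weighted variable $v \triangleq \varphi u = e^{\psi}u$. The paper sets this up as the first line of its proof. Once this substitution is made, the equation becomes $\mathrm{d}u - \sum_{i,j}(a^{ij}u_i)_j\,\mathrm{d}t = \varphi^{-1}\bigl[\mathrm{d}v - \lambda(t+1)^{\lambda-1}v\,\mathrm{d}t - \sum_{i,j}(a^{ij}v_i)_j\,\mathrm{d}t\bigr]$, and the exponential weight is absorbed into $v$ once and for all. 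So there is no ``$\varphi^2(T)$ would be too large'' problem to solve and no cancellation mechanism of the kind you describe in step~(6): the endpoint term $\lambda(T+1)^{\lambda-1}\mathbb{E}\|v(T)\|^2_{L^2(G)}$ simply arises as the boundary contribution of $-\tfrac12\,\mathrm{d}\bigl[\lambda(t+1)^{\lambda-1}v^2\bigr]$, with the exponential already inside $v$. (Your alternative $w=\varphi^{-1}u$ has the weight going the wrong way.)

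The paper's execution is also not the ``apply It\^o to $\varphi^2 v^2$ and $\varphi^2|\nabla v|^2$'' energy plan you sketch. It is a \emph{multiplier method}: one multiplies $\mathrm{d}u - \sum_{i,j}(a^{ij}u_i)_j\,\mathrm{d}t$ by the expression $-\varphi\bigl[\lambda(t+1)^{\lambda-1}v + \sum_{i,j}(a^{ij}v_i)_j + \tfrac14\lambda(t+1)^{-1}v\bigr]$ and expands via It\^o's formula and integration by parts. The crucial $\lambda^2(t+1)^{\lambda-2}v^2$ term then comes from
\[
-\lambda(t+1)^{\lambda-1}v\,\mathrm{d}v = -\tfrac12\,\mathrm{d}\bigl[\lambda(t+1)^{\lambda-1}v^2\bigr] + \tfrac12\lambda(\lambda-1)(t+1)^{\lambda-2}v^2\,\mathrm{d}t + \tfrac12\lambda(t+1)^{\lambda-1}(\mathrm{d}v)^2,
\]
i.e.\ from differentiating the polynomial coefficient in $t$; the auxiliary multiplier $\tfrac14\lambda(t+1)^{-1}v$ is what generates the gradient term $\tfrac14\lambda(t+1)^{-1}\sum_{i,j} a^{ij}v_iv_j$ on the left; and the second-order multiplier $\sum_{i,j}(a^{ij}v_i)_j$ paired against the drift produces a nonnegative square that can be discarded. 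Your direct energy route, as stated, would yield only $\lambda(t+1)^{\lambda-1}v^2$ on the left---one power of $\lambda$ short---unless you build in exactly this extra time-differentiation, which is the multiplier trick in disguise. Your identification of the It\^o corrections $(b_3 u+g)^2\,\mathrm{d}t$ and $\sum_{i,j}a^{ij}\,\mathrm{d}v_i\,\mathrm{d}v_j$ (forcing $g\in H^1_0$ and producing $|\nabla g|^2$ on the right) is correct and matches the paper's handling of the terms it labels $I_4$ and $I_6$.
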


\begin{proof}
We divide the proof into three steps.

\noindent\textit{Step 1. In this step, we establish a weighted  identity.}

Let $v\triangleq \varphi u$. Then we know that
\begin{align}
 \mathrm{d} u-\sum_{i, j=1}^{n}(a^{i j}u_i)_j \mathrm{d} t = \f^{-1}\[\mathrm{d}v-\l(t+1)^{(\l-1)}v\mathrm{d} t-\sum_{i, j=1}^{n}(a^{i j} v_{i})_j \mathrm{d} t\].
\end{align}
Consequently,
\begin{align}\label{512eq1}
\ds&-\f\[\l(t+1)^{(\l-1)}v+\sum_{i, j=1}^{n}(a^{i j} v_{i})_j\]\[\mathrm{d} u-\sum_{i, j=1}^{n}(a^{i j} u_i)_j \mathrm{d} t\]\nonumber\\
\ns\ds= &-\l(t+1)^{(\l-1)}v\mathrm{d}v-\sum_{i, j=1}^{n}(a^{i j} v_{i})_j\mathrm{d}v+\[\l(t+1)^{(\l-1)}v+\sum_{i, j=1}^{n}(a^{i j} v_{i})_j\]^2\mathrm{d} t.
\end{align}

According to It\^o's formula, we know
\begin{align}\label{512eq3}
\ds&-\l(t+1)^{(\l-1)}v\mathrm{d} v=-
\frac12\l(t+1)^{(\l-1)}\mathrm{d} v^2+\frac12\l(t+1)^{(\l-1)}(\mathrm{d} v)^2\nonumber\\
\ns\ds=&-\frac12\mathrm{d}\big[\l(t+1)^{(\l-1)} v^2\big]+\frac12\l(\l-1)(t+1)^{(\l-2)} v^2\mathrm{d}t+\frac12\l(t+1)^{(\l-1)}(\mathrm{d} v)^2.
\end{align}
Noting  $a^{ij}=a^{ji}$ for $i,j=1,\cds,n$, we get that
\begin{align}\label{512eq2}
\ds-\sum_{i, j=1}^{n}(a^{i j} v_{i})_j\mathrm{d}v=&-\sum_{i, j=1}^{n}(a^{i j} v_{i}\mathrm{d}v)_j+\frac12 \mathrm{d}\(\sum_{i, j=1}^{n}a^{i j} v_{i}v_j\)\nonumber\\
\ns\ds&-\frac12 \sum_{i, j=1}^{n}a^{i j}_t v_{i}v_j\mathrm{d}t-\frac12 \sum_{i, j=1}^{n}a^{i j} \mathrm{d}v_{i}\mathrm{d}v_j.
\end{align}
%

Substituting \eqref{512eq2} and \eqref{512eq3} into \eqref{512eq1}, we obtain that
\begin{align}\label{512eq4}
\ds&-\f\[\l(t+1)^{(\l-1)}v+\sum_{i, j=1}^{n}(a^{i j} v_{i})_j\]\[\mathrm{d} u-\sum_{i, j=1}^{n}(a^{i j} u_{i})_j \mathrm{d} t\]\nonumber\\
\ns\ds=&-\sum_{i, j=1}^{n}(a^{i j} v_{i}\mathrm{d}v)_j+\frac12 \mathrm{d}\[\sum_{i, j=1}^{n}a^{i j} v_{i}v_j-\l(t+1)^{(\l-1)} v^2\]-\frac12 \sum_{i, j=1}^{n}a^{i j} \mathrm{d}v_{i}\mathrm{d}v_j\\
\ns\ds&+\frac12 \[-\sum_{i, j=1}^{n}a^{i j}_t v_{i}v_j+\l(\l-1)(t+1)^{(\l-2)} v^2\]\mathrm{d}t+\frac12\l(t+1)^{(\l-1)}(\mathrm{d} v)^2\nonumber\\
\ns\ds&+\[\l(t+1)^{(\l-1)}v+\sum_{i, j=1}^{n}a^{i j} v_{ij}\]^2\mathrm{d} t.\nonumber
\end{align}
Moreover,
\begin{align}\label{514eq2}
&  \frac14\l (t+1)^{-1}\f v\[\mathrm{d} u-\sum_{i, j=1}^{n}(a^{i j} u_i)_j \mathrm{d} t\]\nonumber\\
\ns=&\frac18\l \mathrm{d}\big[(t+1)^{-1}v^2\big]+\frac18\l(t+1)^{-2}v^2dt-\frac18\l(t+1)^{-1} (\mathrm{d}v)^2-\frac14\l^2(t+1)^{(\l-2)}v^2\mathrm{d} t\nonumber\\
\ns&- \frac14(t+1)^{-1}\l\sum_{i, j=1}^{n}(a^{i j} v_{i}v)_j \mathrm{d} t+\frac14(t+1)^{-1}\l\sum_{i, j=1}^{n}a^{i j} v_{i}v_j \mathrm{d} t.
\end{align}

Sum \eqref{512eq4} and \eqref{514eq2} up, we obtain that
\begin{align}\label{514eq3}
&-\f\[\l(t+1)^{(\l-1)}v+\sum_{i, j=1}^{n}a^{i j} v_{ij}+\frac14\l (t+1)^{-1}v\] \[\mathrm{d} u-\sum_{i, j=1}^{n}(a^{i j} u_i)_j \mathrm{d} t\]\nonumber\\
\ns=&-\sum_{i, j=1}^{n}\[a^{i j} v_{i}\mathrm{d}v+\frac14(t+1)^{-1}\l a^{i j} v_{i}v\]_j-\frac12 \sum_{i, j=1}^{n}a^{i j} \mathrm{d}v_{i}\mathrm{d}v_j\nonumber\\
\ns&+\frac12 \mathrm{d}\[\sum_{i, j=1}^{n}a^{i j} v_{i}v_j-\l(t+1)^{(\l-1)} v^2+\frac14\l(t+1)^{-1} v^2\]\\
\ns&+\frac12 \[-\sum_{i, j=1}^{n}a^{i j}_t v_{i}v_j+\l(\l-1)(t+1)^{(\l-2)} v^2-\frac12\l^2(t+1)^{(\l-2)}v^2+\frac14\l(t+1)^{-2}v^2\]\mathrm{d}t\nonumber\\
\ns&+\frac12\[\l(t+1)^{(\l-1)}-\frac14\l(t+1)^{-1}\](\mathrm{d} v)^2+\[\l(t+1)^{(\l-1)}v+\sum_{i, j=1}^{n}a^{i j} v_{ij}\]^2\mathrm{d} t\nonumber\\
\ns\ds&
+\frac14\l(t+1)^{-1}\sum_{i, j=1}^{n}a^{i j} v_{i}v_j \mathrm{d} t.\nonumber
\end{align}

Applying \eqref{514eq3} to the equation \eqref{p1}, integrating \eqref{514eq3} on $[\e,T]\times\Omega$ for $\e\in[0,T)$ and taking mathematical expectation, we have
\begin{align}\label{514eq4}
&-\mathbb{E}\int_\e^T\int_G \f\[\l(t+1)^{(\l-1)}v+\sum_{i, j=1}^{n}a^{i j} v_{ij}+\frac14\l (t+1)^{-1}v\] \[\mathrm{d} u-\sum_{i, j=1}^{n}(a^{i j} u_i)_j \mathrm{d} t\]\mathrm{d}x\nonumber\\
=&-\mathbb{E}\int_\e^T\int_G \sum_{i, j=1}^{n}\[a^{i j} v_{i}\mathrm{d}v+\frac14(t+1)^{-1}\l a^{i j} v_{i}v\mathrm{d} t\]_j\mathrm{d}x\nonumber\\
&+\frac12 \mathbb{E}\int_\e^T\int_G \mathrm{d}\[\sum_{i, j=1}^{n}a^{i j} v_{i}v_j-\l(t+1)^{(\l-1)} v^2+\frac14\l(t+1)^{-1} v^2\]\mathrm{d}x\nonumber\\
&+\frac12 \mathbb{E}\int_\e^T\int_G \[\l(\l-1)(t+1)^{(\l-2)} v^2-\frac12\l^2(t+1)^{(\l-2)}v^2+\frac14\l(t+1)^{-2}v^2\]\mathrm{d} t\mathrm{d}x\nonumber\\
&+\frac12\mathbb{E}\int_\e^T\int_G \[\l(t+1)^{(\l-1)}-\frac14\l(t+1)^{-1}\](\mathrm{d} v)^2\mathrm{d}x\\
&+\mathbb{E}\int_\e^T\int_G \[\l(t+1)^{(\l-1)}v+\sum_{i, j=1}^{n}a^{i j} v_{ij}\]^2\mathrm{d} t\mathrm{d}x\nonumber\\
&-\frac12 \mathbb{E}\int_\e^T\int_G \[\sum_{i, j=1}^{n}a^{i j} \mathrm{d}v_{i}\mathrm{d}v_j+\sum_{i, j=1}^{n}a^{i j}_t v_{i}v_j\mathrm{d} t-\frac12(t+1)^{-1}\l\sum_{i, j=1}^{n}a^{i j} v_{i}v_j \mathrm{d} t\]\mathrm{d}x\nonumber\\
\ns=:&\sum_{j=1}^6 I_j.\nonumber
\end{align}

\noindent\textit{Step 2. In this step, we estimate right hand side of \eqref{514eq4}.}

Since $u\big|_{\Sigma}=0$, $v\big|_\Sigma=0$, by the divergence theorem,  we have
\begin{align}\label{513eq2}
I_1&=-\mathbb{E}\int_\e^T\int_G\sum_{i, j=1}^{n}\[a^{i j} v_{i}\mathrm{d}v+\frac14(t+1)^{-1}\l a^{i j} v_{i}v \mathrm{d} t\]_j\mathrm{d}x =0.
\end{align}

By the assumptions (H1) and (H2), we get
\begin{align}\label{513eq3}
\ds I_2=&\frac12 \mathbb{E}\int_\e^T\int_G\mathrm{d}\[\sum_{i, j=1}^{n}a^{i j} v_{i}v_j-\l(t+1)^{(\l-1)} v^2+\frac14\l(t+1)^{-1} v^2\]\mathrm{d}x\nonumber\\
=&\frac12 \mathbb{E}\int_G\[\sum_{i, j=1}^{n}a^{i j} v_{i}v_j-\l(t+1)^{(\l-1)} v^2+\frac14\l(t+1)^{-1} v^2\]_{t=\delta}^T\mathrm{d}x\nonumber\\
=&\frac12 \mathbb{E}\int_G\[\sum_{i, j=1}^{n}a^{i j} v_{i}(T)v_j(T)-\l(t+1)^{(\l-1)} v^2(T)+\frac14\l(t+1)^{-1} v^2(T)\]\mathrm{d}x\\
&-\frac12 \mathbb{E}\int_G\[\sum_{i, j=1}^{n}a^{i j} v_{i}(\e)v_j(\e)-\l(t+1)^{(\l-1)} v^2(\e)+\frac14\l(t+1)^{-1} v^2(\e)\]\mathrm{d}x\nonumber\\
\geq&\frac12 \mathbb{E}\int_G\sigma |\nabla v(T)|^2\mathrm{d}x-\l(t+1)^{(\l-1)} v(T)^2\mathrm{d}x-\frac12 \mathbb{E}\int_G\sum_{i, j=1}^{n}a^{i j} v_{i}(\e)v_j(\e)\mathrm{d}x\nonumber\\
&+\frac12 \mathbb{E}\int_G\l(t+1)^{(\l-1)}\[1-\frac14(\e+1)^{-\l}\] v^2(\e)\mathrm{d}x.\nonumber
\end{align}
Let us choose $\l\geq1$. Then $1-\frac14(\e+1)^{-\l}\geq\frac34>0$, and thus
\begin{align}\label{615eq1}
\ds I_2
\geq&-\frac12 \mathbb{E}\int_G\l(T+1)^{\l-1} v(T)^2\mathrm{d}x-\frac12 \mathbb{E}\int_G\sum_{i, j=1}^{n}a^{i j} v_{i}(\e)v_j(\e)\mathrm{d}x\nonumber\\
\geq &-\frac12\mathbb{E}\l(T+1)^{\l-1}\|v(T)\|^2_{L^2(G)}-\frac12\sum_{i, j=1}^{n}\|a^{ij}(\e)\|^2_{L^\infty_{\cF_\e}(\Omega;L^\infty(G))}\mathbb{E}\|\nabla v(\e)\|^2_{L^2(G)}.
\end{align}

Since
\begin{align}\label{513eq4}
\ds&-\frac12\mathbb{E} \int_{\e}^{T} \int_G\sum_{i, j=1}^{n} a^{i j} \mathrm{d} v_{i} \mathrm{d} v_{j} \mathrm{d} x\nonumber\\
=&-\frac{1}{2} \mathbb{E} \int_{\e}^{T} \int_G \f^2\sum_{i, j=1}^{n} a^{i j} \left(b_{3} u+g\right)_{i}\left(b_{3} u+g\right)_{j} \mathrm{d} x \mathrm{d} t\nonumber\\
\geq&-C\mathbb{E} \int_{\e}^{T} \int_{G}\left(b_{3}^{2}|\nabla v|^{2}+\left|\nabla b_{3}\right|^{2} v^{2}+\f^2|\nabla g|^{2}+\f^2|g|^{2}\right) \mathrm{d} x \mathrm{d} t\\
\geq&-C\left(\|b_{3}\|_{L_{\mathbb{F}}^{\infty}\left(0, T ; W^{1, \infty}(\Omega)\right)}^{2}+1\right) \mathbb{E} \int_{\e}^{T} \int_{\Omega}\left(|\nabla v|^{2}+v^{2}\right) \mathrm{d} x \mathrm{d} t\nonumber\\
&\quad -C \mathbb{E} \int_{\e}^{T} \int_G\f^2\left(|\nabla g|^{2}+g^{2}\right) \mathrm{d} x \mathrm{d} t,\nonumber
\end{align}
we obtain
\begin{eqnarray}\label{515eq1}
 I_6\3n&=\3n&-\frac12 \mathbb{E}\int_\e^T\int_G \[\sum_{i, j=1}^{n}a^{i j} \mathrm{d}v_{i}\mathrm{d}v_j+\sum_{i, j=1}^{n}a^{i j}_t v_{i}v_j\mathrm{d} t-\frac12\l(t+1)^{-1}\sum_{i, j=1}^{n}a^{i j} v_{i}v_j \mathrm{d} t\]\mathrm{d}x\nonumber\\
&\3n\geq\3n&-C \mathbb{E} \int_{\e}^{T} \int_{G}\left(\|b_{3}\|_{L_{\mathbb{F}}^{\infty}\left(0, T ; W^{1, \infty}(\Omega)\right)}^{2}+1\right)\left(|\nabla v|^{2}+v^{2}\right) \mathrm{d} x \mathrm{d} t \\
&&-C \mathbb{E}\! \int_{\e}^{T} \!\int_G \f^2\left(|\nabla g|^{2}+g^{2}\right) \mathrm{d} x \mathrm{d} t +\mathbb{E}\!\int_\e^T\!\int_G \[-C|\nabla v|^2\!+\frac14\l(t\!+\!1)^{-1}\sigma|\nabla v|^2\]\mathrm{d}x\mathrm{d} t.
\nonumber
\end{eqnarray}

By direct computations, we see that
\begin{align}\label{514eq1}
I_4=&\frac12\mathbb{E}\int_\e^T\int_G \[\l(t+1)^{(\l-1)}-\frac14\l(t+1)^{-1}\](\mathrm{d} v)^2\mathrm{d}x\nonumber\\
=& \frac12\mathbb{E}\int_\e^T\int_G\[\l(t+1)^{(\l-1)}-\frac14\l(t+1)^{-1}\]e^{2(t+1)^\l}(b_3u+g)^2\mathrm{d}x\\
\geq&-\mathbb{E}\int_\e^T\int_G\frac14\l(t+1)^{-1}\[(b_3v)^2+e^{2(t+1)^\l}g^2\]\mathrm{d} t\mathrm{d}x.\nonumber
\end{align}
For  $\l\geq\max\{3,T+1\}$, we have
\begin{align}\label{515eq2}
I_3+I_4=&\frac12 \mathbb{E}\int_\e^T\int_G \[\l(\l-1)(t+1)^{(\l-2)} v^2-\frac12\l^2(t+1)^{(\l-2)}v^2+\frac14\l(t+1)^{-2}v^2\]\mathrm{d} t\mathrm{d} x\nonumber\\
&+\frac12\mathbb{E}\int_\e^T\int_G\[\l(t+1)^{(\l-1)}-\frac14\l(t+1)^{-1}\](\mathrm{d} v)^2\mathrm{d}x\nonumber\\
\geq& \frac13\mathbb{E}\int_\e^T\int_G\l^2(t+1)^{(\l-2)}v^2\mathrm{d}x\mathrm{d} t+O(\l)\mathbb{E}\int_\e^T\int_G v^2\mathrm{d} t\mathrm{d}x\\
\ns&-\frac14\mathbb{E}\int_\e^T\int_G\l(t+1)^{-1}\f^2g^2\mathrm{d}x\mathrm{d} t.\nonumber
\end{align}

Substituting \eqref{513eq2}--\eqref{515eq2} into \eqref{514eq4}, we get
\begin{eqnarray}\label{515eq3}
&&-\mathbb{E}\!\int_\e^T\!\int_G \Big\{\f\[\l(t+1)^{(\l-1)}v+\sum_{i, j=1}^{n}a^{i j} v_{ij}\!+\frac14\l(1+t)^{-1} v\] \[\mathrm{d} u-\!\sum_{i, j=1}^{n}\!(a^{i j} u_i)_j \mathrm{d} t\]\Big\}\mathrm{d}x\nonumber\\
&\geq\3n& -\frac12\mathbb{E}\[\l(T+1)^{\l-1}\|v(T)\|^2_{L^2(G)}+\sum_{i,j=1}^n\|a^{ij}\|^2_{L^\infty_{\cF_\e}(\Omega;W^{1,\infty}(G))}\|\nabla v(\e)\|^2_{L^2(G)}\]\nonumber\\
&&-\mathbb{E} \int_{\e}^{T} \int_{\Omega}\big(\|b_{3}\|_{L_{\mathbb{F}}^{\infty}\left(0, T ; W^{1, \infty}(\Omega)\right)}^{2}+1\big)\left(|\nabla v|^{2}+v^{2}\right) \mathrm{d} x \mathrm{d} t\\
&&-C \mathbb{E} \!\int_{\e}^{T}\! \int_G \f^2\left(|\nabla g|^{2}\!+g^{2}\right) \mathrm{d} x \mathrm{d} t-\mathbb{E}\!\int_\e^T\!\int_G \[C|\nabla v|^2\!-\frac14\l(t\!+1)^{-1}\sigma|\nabla v|^2\]\mathrm{d}x\mathrm{d} t\nonumber\\
&&+\frac13\mathbb{E}\int_\e^T\int_G\l^2(t+1)^{(\l-2)}v^2\mathrm{d}x\mathrm{d} t+O(\l)\mathbb{E}\int_\e^T\int_G v^2\mathrm{d} t\mathrm{d}x\nonumber\\
&&-\frac14\mathbb{E}\int_\e^T\int_G\l(t+1)^{-1}\f^2g^2\mathrm{d}x\mathrm{d} t+\mathbb{E}\int_\e^T\int_G \[\l(t+1)^{(\l-1)}v+\sum_{i, j=1}^{n}a^{i j} v_iv_j\]^2\mathrm{d} t\mathrm{d}x.\nonumber
\end{eqnarray}

\noindent\textit{Step 3. In this step, we estimate left hand side of \eqref{514eq4}, and deduce Carleman inequality \eqref{519eq3}.}

From \eqref{p1}, we know
\begin{align}\label{515eq5}
&-\mathbb{E}\int_\e^T\int_G \f\[\l(t+1)^{(\l-1)}v+\sum_{i, j=1}^{n}a^{i j} v_{ij}+\frac14\l(t+1)^{-1} v\] \[\mathrm{d} u-\sum_{i, j=1}^{n}(a^{i j} u_i)_j \mathrm{d} t\]\mathrm{d}x\nonumber\\
=&-\mathbb{E}\int_\e^T\int_G \f\[\l(t+1)^{(\l-1)}v+\sum_{i, j=1}^{n}a^{i j} v_{ij}+\frac14\l(t+1)^{-1} v\] \left( b_{1}\cd\nabla u +b_{2} u+f\right) \mathrm{d} t\mathrm{d}x\nonumber\\
\leq&\mathbb{E}\int_\e^T\int_G \[\l(t+1)^{(\l-1)}v+\sum_{i, j=1}^{n}a^{i j} v_{ij}\]^2 \mathrm{d} t\mathrm{d}x+\frac1{16}\mathbb{E}\int_\e^T\int_G \[\frac14\l(t+1)^{-1} v\]^2\mathrm{d} t\mathrm{d}x\nonumber\\
&+\frac{17}4\mathbb{E}\int_\e^T\int_G e^{2(t+1)^\l} \left(b_{1}\cd\nabla u+b_{2} u+f\right)^2 \mathrm{d} t\mathrm{d}x\\
\leq& \mathbb{E}\int_\e^T\int_G \[\l(t+1)^{(\l-1)}v+\sum_{i, j=1}^{n}a^{i j} v_{ij}\]^2 \mathrm{d}x\mathrm{d} t+\frac1{256}\mathbb{E}\int_\e^T\int_G \l^2(t+1)^{-2} v^2\mathrm{d} t\mathrm{d}x\nonumber\\
&+\frac{51}4\|b_{1}\|^2_{L^\infty_{\mathbb{F}}(0,T;L^\infty(G,\mathbb{R}^n))}\mathbb{E}\int_\e^T\int_G |\nabla v|^2\mathrm{d}x\mathrm{d} t\nonumber\\
\ns&+\frac{51}4\|b_{2}\|^2_{L^\infty_{\mathbb{F}}(0,T;L^\infty(G))}\mathbb{E}\int_\e^T\int_G |v|^2\mathrm{d}x\mathrm{d} t+\frac{51}4\mathbb{E}\int_\e^T\int_G \f^2 |f|^2\mathrm{d}x\mathrm{d} t.\nonumber
\end{align}

Combining \eqref{515eq3} and \eqref{515eq5}, we know that there exists $\l_1>0$ such that for all $\l\geq \l_1$, there holds
\begin{align}\label{515eq6}
&-\frac12\mathbb{E}\l(T+1)^{\l-1}\|v(T)\|^2_{L^2(G)}-\frac12\sum_{i,j=1}^n\|a^{ij}(\e)\|^2_{L^\infty_{\cF_\e}(\Omega;L^\infty(G))}\mathbb{E}\|\nabla v(\e)\|^2_{L^2(G)}\nonumber\\
&-C\mathbb{E} \int_{\e}^{T} \int_{G}\left(\|b_{3}\|_{L_{\mathbb{F}}^{\infty}\left(0, T ; W^{1, \infty}(\Omega)\right)}^{2}+1\right)\left(|\nabla v|^{2}+v^{2}\right) \mathrm{d} x \mathrm{d} t\nonumber\\
&-C \mathbb{E} \int_{\e}^{T} \int_G \f^2\left(|\nabla g|^{2}+g^{2}\right) \mathrm{d} x \mathrm{d} t -\mathbb{E}\int_\e^T\int_G \[C|\nabla v|^2-\frac14\l(t+1)^{-1}\sigma|\nabla v|^2\]\mathrm{d}x\mathrm{d} t\nonumber\\
&+\frac13\mathbb{E}\int_\e^T\int_G\l^2(t+1)^{(\l-2)}v^2\mathrm{d}x\mathrm{d} t+O(\l)\mathbb{E}\int_\e^T\int_G v^2\mathrm{d} t\mathrm{d}x \\
&-C\mathbb{E}\int_\e^T\int_G\l(t+1)^{(\l-1)}\f^2g^2\mathrm{d}x\mathrm{d} t +\mathbb{E}\int_\e^T\int_G \[\l(t+1)^{(\l-1)}v+\sum_{i, j=1}^{n}a^{i j} v_iv_j\]^2\mathrm{d} t\mathrm{d}x\nonumber\\
\leq&\mathbb{E}\int_\e^T\int_G \[\l(t+1)^{(\l-1)}v+\sum_{i, j=1}^{n}a^{i j} v_{ij}\]^2 \mathrm{d}x\mathrm{d} t+\frac1{256}\mathbb{E}\int_\e^T\int_G \l^2(t+1)^{-2} v^2\mathrm{d} t\mathrm{d}x\nonumber\\
&+\frac{51}4\|b_{1}\|^2_{L^\infty_{\mathbb{F}}(0,T;L^\infty(G,\mathbb{R}^n))}\mathbb{E}\int_\e^T\int_G |\nabla v|^2\mathrm{d}x\mathrm{d} t\nonumber\\
&+\frac{51}4\|b_{2}\|^2_{L^\infty_{\mathbb{F}}(0,T;L^\infty(G,\mathbb{R}^n))}\mathbb{E}\int_\e^T\int_G |v|^2\mathrm{d}x\mathrm{d} t+\frac{51}4\mathbb{E}\int_\e^T\int_G \f^2|f|^2\mathrm{d}x\mathrm{d} t.\nonumber
\end{align}
Let $\l_2= \max\{\l_1,C(\|b_{3}\|_{L_{\mathbb{F}}^{\infty}\left(0, T ; W^{1, \infty}(\Omega)\right)}^{2}+1)\}$. 
Then, for all $\l\geq \l_2$,
\begin{align}\label{515eq6-1}
&\frac13\mathbb{E}\int_\e^T\int_G\l^2\[(t+1)^{(\l-2)}-\frac1{256}\]v^2\mathrm{d}x\mathrm{d} t+O(\l)\mathbb{E}\int_\e^T\int_G v^2\mathrm{d} t\mathrm{d}x\nonumber\\
&+\!\mathbb{E}\!\int_\e^T\!\!\!\int_G\! \[\frac14\l\sigma(t\!+\!1)^{-1}\!\! -\!C(\|b_{3}\|_{L_{\mathbb{F}}^{\infty}(0, T ; W^{1, \infty}(G))}^{2}\!\!+\!1) \!-\!\frac{51}4\|b_{1}\|^2_{L^\infty_\dbF(0,T;L^\infty(G,\mathbb{R}^n))} \]|\nabla v|^2\mathrm{d} t\mathrm{d}x\nonumber\\
\leq&\frac12\mathbb{E}\l(T+1)^{\l-1}\|v(T)\|^2_{L^2(G)}+\frac12\sum_{i,j=1}^n\|a^{ij}(\e)\|^2_{L^\infty_{\cF_\e}(\Omega;L^\infty(G))}\mathbb{E}\|v(\e)\|^2_{L^2(G)}\\
&+C\mathbb{E}\! \int_{\e}^{T}\!\! \int_G \!\f^2\left[ |\nabla g|^{2}\!+\(1 \!+\!\l(t\!+\!1)^{(\l-1)}\)g^{2}\right]\mathrm{d} x \mathrm{d} t \!+\!\frac{51}4\mathbb{E}\!\int_\e^T\!\!\int_G\! e^{2(t+1)^\l}|f|^2\mathrm{d}x\mathrm{d} t.\nonumber
\end{align}
Moreover, let 
$$\l_0= \max\Big\{\l_2,\frac43\sigma^{-1}(T+1)\[C+C(\|b_{3}\|_{L_{\mathbb{F}}^{\infty}(0, T ; W^{1, \infty}(G))}^{2}+1)+\frac{51}4\|b_{1}\|^2_{L^\infty_\dbF(0,T;L^\infty(G,\mathbb{R}^n))}\]\Big\}.$$ 
For all $\l\geq \l_0$, we have 
\begin{align}
&\mathbb{E}\int_\e^T\int_G \l^2 (t+1)^{(\l-2)}v^2\mathrm{d} t\mathrm{d}x+\mathbb{E}\int_\e^T\int_G \l(t+1)^{-1}|\nabla v|^2\mathrm{d} t\mathrm{d}x\nonumber\\
\leq&\frac12\mathbb{E}\l(T+1)^{\l-1}\|v(T)\|^2_{L^2(G)}+\frac12\sum_{i,j=1}^n\|a^{ij}(\e)\|^2_{L^\infty_{\cF_\e}(\Omega;L^\infty(G))}\mathbb{E}\|\nabla v(\e)\|^2_{L^2(G)}\nonumber\\
&+C\mathbb{E} \int_{\e}^{T} \int_{G} \f^2 (f^2+|\nabla g|^2+g^{2}) \mathrm{d} x \mathrm{d} t.\nonumber
\end{align}
This gives \eqref{519eq3}.
\end{proof}

\section{Conditional Stability}

In this section, we establish a conditional stability of the inverse problem ({\bf IPD}).

Let us first introduce the a priori bound for the initial data. Let  $M>0$ and set
\begin{equation}\label{615eq4-2}
	U_M\= \{\xi\in L^2_{\cF_0}(\Om;H^1(G))| |\nabla \xi|\leq M, \; \dbP\mbox{-a.s.}\}.
\end{equation}

\begin{theorem}\label{reguest}
Let $\delta_0>0$ be sufficiently small such that
	\begin{equation}\label{615eq5}
		\ln\(\ln\(\delta_0^{-\frac13}\)^{\frac{1}{\ln(T+1)}}\)\geq\l_0.
	\end{equation}
Suppose that $u_1,u_2 $ are weak solutions of problem \eqref{p1} with initial data belong to $U_M$ and $||u_1(T)-u_2(T)||_{L^2(G)}=\d\leq \d_0$.
Then the following estimate hold
	\begin{equation}\label{616eq6}
		\|u_1-u_2\|_{L^2_\mathbb{F}(\e,T;H^1(G))}\leq C(M+1)e^{-\frac{1}{3^c}(\ln(||u_1(T)-u_2(T)||_{L^2(G)}^{-1})^c)},
	\end{equation}
	where $C$ is independent of $M$ and $c=c(\e,T)=\frac{\ln(\e +1)}{\ln(T+1)}\in (0,1)$.	 
\end{theorem}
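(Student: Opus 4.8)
The plan is to derive the conditional stability estimate \eqref{616eq6} from the Carleman estimate \eqref{519eq3} of Theorem \ref{carleman}, applied to the difference $u \triangleq u_1 - u_2$, which solves \eqref{p1} with $f = g = 0$ and with initial data $u_0 = u_1(0) - u_2(0)$ satisfying $|\nabla u_0| \leq 2M$ $\dbP$-a.s. (since both initial data lie in $U_M$). Setting $v = \f u = e^{(t+1)^\l} u$, the source terms on the right of \eqref{519eq3} vanish, so we are left with
\begin{equation*}
\mathbb{E}\int_\e^T\int_G \l^2 (t+1)^{\l-2}v^2\,\mathrm{d}t\,\mathrm{d}x + \mathbb{E}\int_\e^T\int_G \l(t+1)^{-1}|\nabla v|^2\,\mathrm{d}t\,\mathrm{d}x \leq C\[\l(T+1)^{\l-1}\mathbb{E}\|v(T)\|^2_{L^2(G)} + \mathbb{E}\|v(\e)\|^2_{L^2(G)}\].
\end{equation*}

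\textbf{Step 1: control the terminal term.} On the left, $v(T) = e^{(T+1)^\l}u(T)$ and $\|v(T)\|^2_{L^2(G)} = e^{2(T+1)^\l}\|u(T)\|^2_{L^2(G)} = e^{2(T+1)^\l}\d^2$, so the first right-hand term is $C\l(T+1)^{\l-1}e^{2(T+1)^\l}\d^2$.

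\textbf{Step 2: absorb the $v(\e)$ term using a smallness choice of $\l$.} The term $\mathbb{E}\|v(\e)\|^2_{L^2(G)} = e^{2(\e+1)^\l}\mathbb{E}\|u(\e)\|^2_{L^2(G)}$ is \emph{not} small, but one should instead relate it to the gradient: actually the correct route (compare \eqref{615eq1}) is that the $v(\e)$ contribution should be handled via the a priori bound. Here is the honest difficulty: in \eqref{519eq3} as stated, $\mathbb{E}\|v(\e)\|^2_{L^2(G)}$ appears, and to bound $u(\e)$ we need to propagate the a priori bound on $u(0)$ forward; alternatively, re-examining the proof, the quantity actually produced there is $\mathbb{E}\|\nabla v(\e)\|^2_{L^2(G)}$, which by $v(\e) = e^{(\e+1)^\l}u(\e)$ and an energy estimate for \eqref{p1} with zero source (giving $\mathbb{E}\|\nabla u(\e)\|_{L^2(G)}^2 \leq C\mathbb{E}\|\nabla u_0\|_{L^2(G)}^2 \leq C M^2$, using (H1)--(H3) and Gronwall) is bounded by $Ce^{2(\e+1)^\l}M^2$. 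So the inequality becomes, after discarding the $|\nabla v|^2$ term on the left and keeping the $v^2$ term on a subinterval,
\begin{equation*}
\l^2(\e+1)^{\l-2}e^{2(\e+1)^\l}\,\mathbb{E}\|u\|^2_{L^2_\dbF(\e,T;L^2(G))} \leq C\l(T+1)^{\l-1}e^{2(T+1)^\l}\d^2 + C e^{2(\e+1)^\l}M^2,
\end{equation*}
where I used $(t+1)^\l \geq (\e+1)^\l$ and $\f^2 = e^{2(t+1)^\l} \geq e^{2(\e+1)^\l}$ on $[\e,T]$ to pull the weight out as a lower bound; the $|\nabla v|^2$ term is handled the same way to recover the full $H^1$ norm. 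Dividing through,
\begin{equation*}
\mathbb{E}\|u\|^2_{L^2_\dbF(\e,T;H^1(G))} \leq C(T+1)^{\l-1}e^{2(T+1)^\l - 2(\e+1)^\l}\d^2 + CM^2 e^{-2(\e+1)^\l}\cdot(\e+1)^{2-\l}.
\end{equation*}

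\textbf{Step 3: optimize in $\l$.} The two terms compete: the first grows like $e^{2(T+1)^\l}\d^2$ as $\l\to\infty$, the second decays like $e^{-2(\e+1)^\l}M^2$. Balancing $e^{2(T+1)^\l}\d^2 \sim M^2 e^{-2(\e+1)^\l}$, i.e. $(T+1)^\l \sim \ln(\d^{-1})$ roughly, one chooses $\l$ so that $(T+1)^\l = \tfrac13\ln(\d^{-1})$, equivalently $\l = \frac{\ln(\tfrac13\ln\d^{-1})}{\ln(T+1)}$, which is $\geq \l_0$ precisely when $\d \leq \d_0$ by the hypothesis \eqref{615eq5}. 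With this choice, $(\e+1)^\l = \big((T+1)^\l\big)^{\ln(\e+1)/\ln(T+1)} = \big(\tfrac13\ln\d^{-1}\big)^c$ where $c = \frac{\ln(\e+1)}{\ln(T+1)} \in (0,1)$. Substituting, both terms are controlled by $C(M+1)^2 e^{-2\cdot 3^{-c}(\ln\d^{-1})^c}$ (the polynomial-in-$\l$ prefactors and the $\d^2$ in the first term are absorbed into the exponential since $(\ln\d^{-1})^c$ grows slower than $\ln\d^{-1}$, so $\d^2 e^{2(T+1)^\l} = \d^2 e^{\tfrac23\ln\d^{-1}} = \d^{4/3} \to 0$ faster — one keeps careful track that the first term is in fact even smaller). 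Taking square roots gives \eqref{616eq6}.

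The main obstacle I anticipate is \textbf{Step 2}: making rigorous the passage from the $\mathbb{E}\|v(\e)\|^2$ (or $\mathbb{E}\|\nabla v(\e)\|^2$) term to the a priori bound $M$. This requires a forward energy estimate for the stochastic parabolic equation \eqref{p1} with zero source on $[0,\e]$ controlling $\mathbb{E}\|\nabla u(\e)\|^2_{L^2(G)}$ by $\mathbb{E}\|\nabla u(0)\|^2_{L^2(G)} \leq C M^2$ — standard by It\^o's formula and Gronwall under (H1)--(H3), but it must be invoked cleanly, and one must be careful that the constant there does not reintroduce a bad dependence. A secondary subtlety is bookkeeping the various $\l$-polynomial prefactors through the optimization so that they are genuinely absorbed into the stretched-exponential rate; this is routine but must be done with care to land exactly on the exponent $3^{-c}(\ln\d^{-1})^c$ claimed in \eqref{616eq6}.
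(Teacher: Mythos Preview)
Your strategy is exactly the paper's: set $w=u_1-u_2$, apply \eqref{519eq3} with $f=g=0$, lower-bound the left side by $Ce^{2(\e+1)^\l}\|w\|^2_{L^2_\dbF(\e,T;H^1(G))}$, bound the terminal contribution by $e^{3(T+1)^\l}\d^2$ and the $\e$-contribution via the a~priori bound $M$, and then pick $\l$ so that $(T+1)^\l=\tfrac13\ln\d^{-1}$. The paper, like you, actually uses the gradient form $\|\nabla v(\e)\|^2$ coming from \eqref{615eq1} rather than the $\|v(\e)\|^2$ stated in \eqref{519eq3}, and passes from $\mE\|\nabla w(\e)\|^2$ straight to $CM^2$; your explicit appeal to a forward energy estimate on $[0,\e]$ is the honest version of that step.

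There is, however, a concrete arithmetic slip in your Step~2. Before dividing, the second right-hand term is $Ce^{2(\e+1)^\l}M^2$; after dividing by $\l^2(\e+1)^{\l-2}e^{2(\e+1)^\l}$ it becomes $CM^2\l^{-2}(\e+1)^{2-\l}$, because the two factors $e^{2(\e+1)^\l}$ \emph{cancel}---they do not leave behind $e^{-2(\e+1)^\l}$ as you wrote. With the corrected term, substituting $(T+1)^\l=\tfrac13\ln\d^{-1}$ gives $(\e+1)^{-\l}=\big(\tfrac13\ln\d^{-1}\big)^{-c}$, so the $M$-term decays only like $(\ln\d^{-1})^{-c}$, not like $e^{-2\cdot 3^{-c}(\ln\d^{-1})^c}$. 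Hence your balancing argument in Step~3 does not deliver the stretched-exponential rate claimed in \eqref{616eq6}. The paper's own proof has the same lacuna: in \eqref{616eq2} the $M$-term $\bar C_2\mE\|\nabla w(\e)\|^2$ carries no $\l$-dependent decay, yet \eqref{616eq5} asserts the factor $e^{-\frac{2}{3^c}(\ln\d^{-1})^c}$ on it without justification. So you are not missing an idea that the paper supplies; as written, both arguments only yield a bound of the form $\bar C_1\d+\bar C_2 M^2$ (or at best $\bar C_1\d+\bar C_2 M^2(\ln\d^{-1})^{-c}$ if one retains the polynomial prefactor), and obtaining the stated rate would require a sharper handling of the $\e$-boundary term.
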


\begin{proof}  Let  $w(x,t)=u_1(x,t)-u_2(x,t)$ and $R(x)=u_1(x,T)-u_2(x,T)$. From \eqref{p1}, we have
 \begin{equation}\label{615eq6}
\left\{\begin{array}{ll}
\ds\mathrm{d} w-\sum_{i, j=1}^{n}(a^{i j} w_{i})_j \mathrm{d} t=\left( b_{1}\cd\nabla w +b_{2} w\right)\mathrm{d} t+b_{3} w \mathrm{d} W(t) & \text { in } Q, \\ 
\ns w(x,t) =0  & \text { on } \Sigma,  \\
\ns w(x,T)=R(x)  & \text { in } G.
\end{array}\right.
\end{equation}

Applying \eqref{519eq3} to \eqref{615eq6}, we obtain 
\begin{align}\label{616eq1}
&\frac12\l(T+1)^{\l-1}e^{2(T+1)^\l}\mathbb{E}\|w(T)\|^2_{L^2(G)}+\frac12\sum_{i,j=1}^n\|a^{ij}\|^2_{L^\infty_{\cF_\e}(\Omega;L^\infty(G))}e^{2(\e+1)^\l}\mathbb{E}\|w(\e)\|^2_{L^2(G)}\nonumber\\
\geq&\mathbb{E}\int_\e ^T\int_G \l^2 (t+1)^{(\l-2)}\f^2 w^2 \mathrm{d} t\mathrm{d}x+\mathbb{E}\int_\e^T\int_G \l(t+1)^{-1}\f^2|\nabla w|^2 \mathrm{d} t\mathrm{d}x\nonumber\\
\geq&\mathbb{E}\int_\e^T\int_G \l^2 \f^2w^2 \mathrm{d} t\mathrm{d}x+(\e+1)^{-1}\mathbb{E}\int_\e^T\int_G \l\f^2|\nabla w|^2  \mathrm{d} t\mathrm{d}x\\
\geq&\bar{C}e^{2(\e +1)^\l}\mathbb{E}\int_\e^T\int_G \big(w^2+|\nabla w|^2\big) \mathrm{d} t\mathrm{d}x\nonumber\\
=&\bar{C}e^{2(\e +1)^\l}\|w\|_{L^2_\mathbb{F}(\e,T;H^1(G))}.\nonumber
\end{align}
By choosing $\l$ large enough such that $\l(T+1)^{\l-1}e^{2(T+1)^\l}\leq e^{3(T+1)^\l}$, then there hold
\begin{align}\label{616eq2}
\|w\|^2_{L^2_\mathbb{F}(\e,T;H^1(G))}\leq \bar{C}_1e^{3(T+1)^\l}\mathbb{E}\|w(T)\|^2_{L^2(G)}+\bar{C}_2\mathbb{E}\|\nabla w(\e)\|^2_{L^2(G)}.
\end{align}

Choosing 
$\l=\l(\delta)$ 
as
\begin{equation}\label{616eq4}
\l=\l(\delta)=\ln\[\(\ln(\delta^{-\frac13})\)^\frac{1}{\ln(T+1)}\],
\end{equation}
and substituting it in to \eqref{616eq2} we get
\begin{equation}\label{616eq5}
\|w\|^2_{L^2_\mathbb{F}(\e,T;H^1(G)}\leq \bar{C}_1\delta+\bar{C}_2M^2e^{-\frac{2}{3^c}\ln(\delta^{-1})^c}.
\end{equation}
This implies \eqref{616eq6}.
\end{proof}

\begin{remark}\label{rmk3.1}
In Theorem \ref{reguest}, the norm in the left hand side is $\|\cd\|_{L^2_\dbF(\e,T;H^1(G)}$. We can also obtain the estimate at each $t_0\in(0,T)$. The argument is as follows. Choose $\e<t_0$ in Theorem \ref{reguest}. Since 
\begin{equation}\label{ree1}
C\|w(s)\|_{H^1(G)}^2\geq\|w(t_0)\|_{H^1(G)}^2, \qquad \forall \e\leqslant s\leqslant t_0,
\end{equation}
integrating both side of \eqref{ree1} with respect to $s$ in $[\e,t_0]$, we get
\begin{equation}\label{ree2}
C\int_\e^{t_0}\|w(s)\|_{H^1(G)}^2ds\geq (t_0-\e)\|w(t_0)\|_{H^1(G)}^2.
\end{equation}
Combining \eqref{ree2} with \eqref{616eq5}, we can obtain 
\begin{equation*} 
\|w(t_0)\|_{H^1(G)}^2\leq C(t_0-\e)^{-1}\int_0^{t_0}\|w(s)\|_{H^1(G)}^2ds\leq C(t_0-\e)^{-1}(\bar{C}_2M+\bar{C}_1)e^{-\frac{2}{3^c}(\ln(\delta^{-1})^c)}.
\end{equation*}
\end{remark}

\section{Regularization for the reconstruction problem}

In the following, we prove a convergence result by the Carleman estimate \eqref{carleman}. Suppose $u_T \in L^2_{\cF_T}(\Omega;L^2(G))$ is the exact terminal value of the initial-boundary problem \eqref{p1} with the initial datum $u_0\in L^2_{\cF_0}(\Omega;H^2(G)\cap H_0^1(G))$,  and $u_T^\delta\in L^2_{\cF_T}(\Omega;L^2(G)) $ the noisy data satisfying
\begin{equation}\label{620eq2}
\|u_T-u_T^\delta\|_{L^2_{\cF_T}(\Omega;L^2(G))}\leq \delta,
\end{equation} 
where $\delta>0$ is the noise level of data.

Consider  the following Tikhonov type functional on $L^2_{\cF_0}(\Om;H^2(G)\cap H_0^1(G))$:
\begin{align}\label{616eq7}
J_\alpha(y_{0})\! =& \mathbb{E}\! \int_G\! \Big\{
y_{0}+\!\int_{t_0}^{T}\!\[\sum_{i, j=1}^{n}(a^{i j} y_{i})_j\!+\!b_{1}\cd \nabla y \!+\! b_{2} y\!+\!f\] \mathrm{d}s +\!\int_{t_0}^{T}\! \left(b_{3}  y \!+\!g \right) \mathrm{d} W(s)\!-\!u_T^\delta\Big\}^2\mathrm{d}x \nonumber\\ &+\!\alpha\mE\|y_{0}\|^2_{H^2(G)}, 
\qq\qq\qq  \forall y_0\in L^2_{\cF_0}(\Om;H^2(G)\cap H_0^1(G)),
\end{align}
where $y$ solves the following equation:
\begin{equation}\label{p2}
	\left\{\begin{array}{ll}
		\ds\mathrm{d} y-\sum_{i, j=1}^{n}(a^{i j} y_{i})_j \mathrm{d} t=\left( b_{1}\cd \nabla y +b_{2} y+f\right) \mathrm{d} t+\left(b_{3} y+g\right) \mathrm{d} W & \text { in } (0,T)\times G, \\ 
		\ns y=0 & \text { on } (0,T)\times \pa G, \\ 
		\ns y(0)=y_0, & \text { in } G.
	\end{array}\right.\end{equation}

\noindent{\bf Minimization problem.} Minimize the functional $J_\alpha(y_0)$ on the space $L^2_{\cF_{0}}(\Om;H^2(G)\cap H_0^1(G))$.

Since the functional  $J_\alpha(y_0)$ is coercive, convex and lower semi-continuous, it has a unique minimizer $\bar y_{0}^\d$.

\begin{theorem}[Convergence rate]
Assume that conditions (H1)--(H3) hold. Let $\l_0>1$ be the number in Theorem \ref{carleman} and let $\alpha=\alpha(\delta)=\delta^2$. Then there exists a number
\begin{equation}\label{620eq3}
\l_3=\l_3\(\sigma,\max_{i,j}\|a^{ij}\|_{L^\infty(G;\mathbb{R}^{n\times n})}, G\)\geq\l_0>1,
\end{equation}
such that if $\delta\in(0,\delta_0)$  and $\delta\in(0,1)$ is sufficient small such that
\begin{align}\label{620eq4-2}
 \l_3\leq\ln\[\(\ln\(\delta_0^{-\frac12}\)\)^{\frac{1}{\ln(T+1)}}\],\q
 \delta_0\leq e^{-3^{1-c}(\ln(\delta_0^{-1}))^c},
\end{align}
then the following convergence estimate of the Tikhonov method holds for every $\tau\in(0,T)$,
\begin{equation}
	\begin{array}{ll}\ds
\|y(\tau;\bar y_{0}^\d)-y(\tau;u_0)\|_{L^2_{\cF_{t_0}}(\Om;H^2(G)\cap H_0^1(G))}\\
\ns\ds\leq C_1\big(1+\|y_0\|_{L^2_{\cF_{t_0}}(\Om;H^2(G)\cap H_0^1(G))}\big)e^{-3^{-c}\big(\ln(\delta_0^{-1})\big)^c},
\end{array}
\end{equation}
where $c=c(\tau,T)=\frac{\ln(\tau+1)}{\ln(T+1)}\in(0,1)$.
\end{theorem}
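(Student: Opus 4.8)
The plan is to derive the convergence estimate from two ingredients: a variational inequality coming from the fact that $\bar y_0^\delta$ minimizes $J_\alpha$, and the conditional stability estimate of Theorem \ref{reguest} (or rather its underlying Carleman estimate, Theorem \ref{carleman}, applied to the difference of two solutions). First I would write $z := y(\cdot;\bar y_0^\delta) - y(\cdot;u_0)$, which solves the homogeneous version of \eqref{p1} (no $f$, no $g$) with $z(0) = \bar y_0^\delta - u_0$. The key point is to control $\mathbb{E}\|z(T)\|_{L^2(G)}^2$ and $\mathbb{E}\|\nabla z(0)\|_{L^2(G)}^2$ in terms of $\delta$ and the a priori norm $\|u_0\|_{L^2_{\cF_0}(\Omega;H^2\cap H^1_0)}$, then feed these into the Carleman estimate.

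The variational step: since $\bar y_0^\delta$ minimizes the convex functional, comparing $J_\alpha(\bar y_0^\delta) \leq J_\alpha(u_0)$ gives, using \eqref{620eq2} and $\alpha = \delta^2$, a bound of the form $\mathbb{E}\|y(T;\bar y_0^\delta) - u_T^\delta\|^2 + \delta^2 \mathbb{E}\|\bar y_0^\delta\|_{H^2}^2 \leq \mathbb{E}\|y(T;u_0) - u_T^\delta\|^2 + \delta^2\mathbb{E}\|u_0\|_{H^2}^2 = \mathbb{E}\|u_T - u_T^\delta\|^2 + \delta^2\mathbb{E}\|u_0\|_{H^2}^2 \leq \delta^2(1 + \mathbb{E}\|u_0\|_{H^2}^2)$. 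From this I extract both $\mathbb{E}\|y(T;\bar y_0^\delta) - u_T^\delta\|^2 = O(\delta^2(1+\|u_0\|_{H^2}^2))$, hence by the triangle inequality with \eqref{620eq2} also $\mathbb{E}\|z(T)\|^2 = \mathbb{E}\|y(T;\bar y_0^\delta) - u_T\|^2 = O(\delta^2(1+\|u_0\|_{H^2}^2))$, and $\delta^2\mathbb{E}\|\bar y_0^\delta\|_{H^2}^2 = O(\delta^2(1+\|u_0\|_{H^2}^2))$, so $\mathbb{E}\|\bar y_0^\delta\|_{H^2}^2 = O(1 + \|u_0\|_{H^2}^2)$; in particular $\mathbb{E}\|\nabla z(0)\|^2 \leq C\mathbb{E}\|\nabla\bar y_0^\delta\|^2 + C\mathbb{E}\|\nabla u_0\|^2 = O(1 + \|u_0\|_{H^2}^2)$. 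So $z(0)$ lies (up to a controlled constant) in a set like $U_M$ with $M \sim C(1 + \|u_0\|_{H^2})$, and $\|z(T)\| \lesssim \delta\sqrt{1 + \|u_0\|_{H^2}^2}$.

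Then I would apply Theorem \ref{carleman} to $z$ (with $g = f = 0$) exactly as in the proof of Theorem \ref{reguest}: the estimate \eqref{616eq2}–\eqref{616eq5} becomes $\|z\|_{L^2_\dbF(\tau,T;H^1)}^2 \leq \bar C_1 e^{3(T+1)^\lambda}\mathbb{E}\|z(T)\|^2 + \bar C_2 \mathbb{E}\|\nabla z(0)\|^2$, and with the choice $\lambda = \lambda(\delta) = \ln[(\ln(\delta^{-1/2}))^{1/\ln(T+1)}]$ (which makes $e^{3(T+1)^\lambda}\mathbb{E}\|z(T)\|^2 = e^{3(\ln\delta^{-1/2})}\cdot O(\delta^2(\cdots)) = \delta^{-3/2}\cdot O(\delta^2(\cdots)) = O(\delta^{1/2}(\cdots))$, comparable to the second term's $(1+\|u_0\|_{H^2}^2)$ weighted by the logarithmic factor $e^{-\frac{2}{3^c}(\ln\delta^{-1})^c}$), one obtains $\|z\|_{L^2_\dbF(\tau,T;H^1)}^2 \leq C(1 + \|u_0\|_{H^2}^2)e^{-\frac{2}{3^c}(\ln\delta^{-1})^c}$. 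Conditions \eqref{620eq3}–\eqref{620eq4-2} are precisely what is needed to guarantee $\lambda(\delta) \geq \lambda_3 \geq \lambda_0$ so that the Carleman estimate is valid and the $\delta_0$-threshold makes the exponents work out. Finally, to pass from the $L^2_\dbF(\tau,T;H^1)$ norm to the pointwise-in-$t$ $H^2\cap H^1_0$ norm at $\tau$, I would argue as in Remark \ref{rmk3.1} — using continuity/regularity of the solution and an averaging argument over a short interval — possibly combined with a parabolic regularity (energy) estimate bootstrapping $H^1$ control to $H^2$ control using that the source-free equation is satisfied and $z(0) \in H^2\cap H^1_0$.

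The main obstacle I anticipate is the last step: upgrading from the space-time $H^1$ estimate to a pointwise-in-time $H^2$ estimate. The Carleman estimate only delivers $L^2$-in-time, $H^1$-in-space control of $z$, whereas the theorem claims an $H^2\cap H^1_0$ bound at a fixed time $\tau$. In the deterministic setting one would use interior parabolic regularity, but here $z$ is only Itô-differentiable in $t$, so one must be careful: the route is presumably a stochastic energy estimate for \eqref{p1} that propagates the $H^2$ norm of the initial data forward (giving $z \in L^2_\dbF(\Omega; C([0,T]; H^2\cap H^1_0))$ with norm controlled by $\mathbb{E}\|z(0)\|_{H^2}^2 = O(1+\|u_0\|_{H^2}^2)$), interpolated against the smallness of $\|z\|_{L^2_\dbF(\tau,T;H^1)}$ just obtained, via an interpolation inequality $\|z(\tau)\|_{H^2}^{\theta} \lesssim \|z\|_{L^2(\tau',T;H^1)}^{1-\theta}\cdot(\text{higher norm})^{\theta}$ type argument or a duality/averaging trick. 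Making this rigorous in the stochastic framework — ensuring all the constants depend only on the stated quantities and that the interpolation exponents still leave a logarithmic-type rate — is where the real work lies; the rest is bookkeeping with the choice of $\lambda(\delta)$ and verifying the threshold conditions \eqref{620eq3}–\eqref{620eq4-2}.
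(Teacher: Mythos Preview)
Your proposal is correct and follows essentially the same route as the paper. The paper also sets $\tilde u = y(\cdot;u_0) - y(\cdot;\bar y_0^\delta)$, bounds $\mathbb{E}\|\tilde u(T)\|_{L^2}^2$ and $\mathbb{E}\|\nabla(\bar y_0^\delta - u_0)\|_{L^2}^2$ in terms of $\delta$ and $\|u_0\|_{H^2}$, plugs these into the Carleman estimate \eqref{519eq3}, and then says the remainder is ``similar to the proof of Theorem~\ref{reguest}''.

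There is one minor methodological difference worth noting. For the a priori bound on $\bar y_0^\delta - u_0$ in $H^2$, you use the elementary comparison $J_\alpha(\bar y_0^\delta)\le J_\alpha(u_0)$ directly, which yields $\mathbb{E}\|\bar y_0^\delta\|_{H^2}^2 \le 1 + \mathbb{E}\|u_0\|_{H^2}^2$ and then the triangle inequality. The paper instead writes out the Euler--Lagrange (G\^ateaux-derivative) equation for the minimizer, subtracts the corresponding identity for $u_0$, takes $\phi_0 = \bar y_0^\delta - u_0$, and applies Cauchy--Schwarz to obtain $\mathbb{E}\|\bar y_0^\delta - u_0\|_{H^2}^2 \le 1 + \|u_0\|_{H^2}^2$ directly (equations \eqref{620eq4}--\eqref{620eq8}). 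Your route is shorter; the paper's route gives a slightly sharper constant on the difference but is otherwise equivalent. For the terminal bound $\mathbb{E}\|\tilde u(T)\|^2$, the paper does exactly what you do: it uses $J_\alpha(\bar y_0^\delta)\le J_\alpha(u_0)$ and the triangle inequality with \eqref{620eq2}.

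Regarding the obstacle you flag --- upgrading from the $L^2_\dbF(\tau,T;H^1)$ estimate delivered by the Carleman inequality to the pointwise-in-$t$ $H^2\cap H_0^1$ norm claimed in the statement --- the paper does not carry this out either; it simply refers back to Theorem~\ref{reguest} and Remark~\ref{rmk3.1}, which themselves only reach $H^1$. So you are not missing a trick here: the $H^1\to H^2$ upgrade is glossed over in the paper, and your instinct that this step would require additional regularity/interpolation work is accurate.
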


\begin{proof}
For $y_0\in L^2_{\cF_{0}}(\Om;H^2(G)\cap H_0^1(G))$, denote by $y(\cd;y_{0})$ the solution to \eqref{p2} with $y(t_0)=y_{0}$. For simplicity of notations, we denote
$$\begin{aligned}
	\cA y_0\triangleq & y_0+ \int_0^T \[\sum_{i, j=1}^{n}(a^{i j}(s) y_{i}(s;y_0))_j\!+\!b_{1}(s)\cd \nabla y(s;y_0) \!+\! b_{2} y(s;y_0)\!+\!f(s)\]\text{d}s \\ 
	& +\int_0^T\int_G[b_3(s)y(s;y_0)+g(s)]\text{d}W(s), \quad \dbP\mbox{-a.s.} 
\end{aligned}$$ 
and
$$\begin{aligned}
	\cB y_0\triangleq & y_0+ \int_0^T \[\sum_{i, j=1}^{n}(a^{i j}(s) y_{i}(s;y_0))_j\!+\!b_{1}(s)\cd \nabla y(s;y_0) \!+\! b_{2}(s) y(s;y_0) \]\text{d}s \\
	& +\int_0^T\int_G b_3(s)y(s;y_0) \text{d}W(s), \quad \dbP\mbox{-a.s.} 
\end{aligned}$$ 

By computing G\^ateaux derivative of $J_\alpha(y_0)$, we get that
\begin{align}\label{620eq4}
J_\a'(y_0)\phi_0=2\mE\int_G \cA y_{0}\cB \phi_0 \mathrm{d}x-2\mE\int_G u_T(x)\cB \phi_0\mathrm{d}x +2\alpha\mE\langle y_{0},\phi_0\rangle_{H^2(G)},\\
\qq\q \forall \phi_0\in L^2_{\cF_{0}}(\Om;H^2(G)\cap H_0^1(G)).\nonumber
\end{align}
Particularly, for
the minimizer $\bar y_{0}^\d\in L^2_{\cF_{0}}(\Om;H^2(G)\cap H_0^1(G))$ of \eqref{616eq7}, we have
\begin{align}\label{620eq4-1}
\mE\int_G \cA \bar y_{0}^\d\cB\phi_0 \mathrm{d}x+ \alpha\mE\langle \bar y_{0}^\d,\phi_0\rangle_{H^2(G)} 
 =  \mE\int_G u_T^\d(x)\cB\phi_0\mathrm{d}x,\\
	\qquad\q  \forall \phi_0\in L^2_{\cF_{0}}(\Om;H^2(G)\cap H_0^1(G)).\nonumber
\end{align}
Noting that $u_0$ is the exact initial value, we have
\begin{align}\label{620eq4-3}
\mE\int_G \cA u_{0}\cB\phi_0 \mathrm{d}x+ \alpha\mE\langle u_{0},\phi_0\rangle_{H^2(G)} 
=  \mE\int_G u_T(x)\cB\phi_0\mathrm{d}x+ \alpha\mE\langle u_{0},\phi_0\rangle_{H^2(G)},\\
\qquad\q  \forall \phi_0\in L^2_{\cF_{0}}(\Om;H^2(G)\cap H_0^1(G)).\nonumber
\end{align}

From \eqref{620eq4-1} and \eqref{620eq4-3}, we obtain
\begin{align}\label{620eq5}
&\mE\int_G \big(\cA \bar y_{0}^\d-\cA u_{0}\big) \cB\phi_0 \mathrm{d}x+ \alpha\mE\langle \bar y_{0}^\d-u_{0},\phi_0\rangle_{H^2(G)}\\
\ns=&\int_G (u_T^\delta(x)-u_T(x)) \cB\phi_0 \mathrm{d}x-\alpha\mE\langle u_{0},\phi_0\rangle_{H^2(G)},\q  \forall \phi_0\in L^2_{\cF_{0}}(\Om;H^2(G)\cap H_0^1(G)).\nonumber
\end{align}
Set $\phi_0=\bar y_{0}^\d-u_0$ in \eqref{620eq5}. By using Cauchy-Schwarz inequality, we obtain
\begin{align}\label{620eq6}
&\int_G\big(\cA \bar y_{0}^\d-\cA u_0\big) ^2\mathrm{d}x+\alpha\mE\|\bar y_{0}^\d-u_0\|^2_{H^2(G)}\\
\leq & \frac12\int_G\big(\cA \bar y_{0}^\d-\cA u_0\big)^2\mathrm{d}x+\frac12\mE\| u_T^\delta-u_T\|_{H^2(G)}^2 + \frac12\alpha\|u_0\|_{H^2(G)}^2+\frac12\alpha\mE\|\bar y_{0}^\d-u_0\|^2_{H^2(G)}.\nonumber
\end{align}
Simplify \eqref{620eq6} and noting \eqref{620eq3}, we have
\begin{align}\label{620eq7}
\int_G\big(\cA \bar y_{0}^\d-\cA u_0\big)^2\mathrm{d}x+\alpha\mE\|\bar y_{0}^\d-u_0\|^2_{H^2(G)} \leq \delta^2+\alpha\|u_0\|_{H^2(G)}^2.
\end{align}
Since $\alpha=\alpha(\delta)=\delta^2$, we know from \eqref{620eq7} that
\begin{equation}\label{620eq8}
\mE\|u_{0}-\bar y_0^\d\|^2_{H^2(G)}\leq 1+\|u_0\|_{H^2(G)}^2.
\end{equation}

Let $\tilde u=y(\cd;u_{0})-y(\cd;\bar y_0^\d)$. Then $\tilde u$ solves the following equation
\begin{equation}\label{p2-1}
\left\{\begin{array}{ll}\ds\mathrm{d} \tilde{u}-\sum_{i, j=1}^{n}(a^{i j} \tilde{u}_{i})_j \mathrm{d} t=\left(b_{1}\cd\nabla \tilde{u}+b_{2} \tilde{u}\right) \mathrm{d} t+b_{3} \tilde{u} \mathrm{d} W(t)& \text { in } Q, \\
\ns \tilde{u}=0 & \text { on } \Sigma, \\
\ns \tilde{u}(0)=u_{0}-\bar y_0^\d, & \text { in } G.
\end{array}\right.\end{equation}
By \eqref{519eq3}, we get that
\begin{align}\label{620eq9}
&\l^2 \mathbb{E}\int_\e^T\int_G (t+1)^{(\l-2)}\f^2\tilde{u}^2\mathrm{d} t\mathrm{d}x+ \l\mathbb{E}\int_\e^T\int_G(t+1)^{-1}\f^2|\nabla \tilde{u}|^2\mathrm{d} t\mathrm{d}x\\
\ns\leq& C\[\mathbb{E}\l(T\!+\!1)^{\l-1}e^{2(T+1)^\l}\|\tilde{u}(T)\|^2_{L^2(G)}+\! \sum_{i,j=1}^n\!\|a^{ij}\|^2_{L_{\mathbb{F}}^{\infty} ( 0, T; L^{\infty}(G) )}\mathbb{E}\|\nabla(u_{0}\!-\!\bar y_0^\d)\|^2_{L^2(G)}\].\nonumber
\end{align}

Since 
\begin{align}\label{621eq1}
\mE\|\tilde{u}(T)\|^2_{L^2(G)}\leq \mE\|y(T;u_{0})-u_T^\delta\|^2_{L^2(G)}+\mE\|u_T^\delta-y(T; \bar y_0^\d)\|^2_{L^2(G)},
\end{align}
and due to $\bar y_{0}^\d$ is the minimizer of $J_\alpha(\cd)$, we know
\begin{equation}\label{621eq2}
\begin{array}{ll}\ds
\mE\|y(T;\bar y_{0}^\d)-u_T^\delta\|^2_{L^2(G)}\3n&\ds\leq J_\alpha(\bar y_{0}^\d)\leq J_\alpha(u_0)\\
\ns&\ds=\mE\|u_T^\delta-y(T; u_{0})\|^2_{L^2(G)}+\alpha\mE\|u_0\|^2_{H^2(G)}.
\end{array}
\end{equation}
Substituting \eqref{621eq2} into \eqref{621eq1}, we get
\begin{equation}\label{621eq3}
\begin{array}{ll}\ds
\mE\|\tilde{u}(T)\|^2_{L^2(G)}\3n&\ds\leq2\mE\|u_T^\delta-y(T; u_{0})\|^2_{L^2(G)}+\alpha\mE\|u_0\|^2_{H^2(G)}\\
\ns&\ds=2\delta^2+\alpha\mE\|u_0\|^2_{H^2(G)}.
\end{array}
\end{equation}

Combining \eqref{620eq9} with \eqref{621eq3}, we have
\begin{align*}\label{621eq4}
&\mathbb{E}\int_\e^T\int_G \l^2 (t+1)^{(\l-2)}e^{2(t+1)^\l}\tilde{u}^2\mathrm{d} t\mathrm{d}x+\mathbb{E}\int_\e^T\int_G \l(t+1)^{-1}e^{2(t+1)^\l}|\nabla \tilde{u}|^2\mathrm{d} t\mathrm{d}x\nonumber\\
\ns\leq&\frac12\mathbb{E}\l(T + 1)^{\l-1}\f(T)^2(2\delta^2 + \alpha\mE\|u_0\|^2_{H^2(G)})\\
& +\frac{e^2}2\sum_{i, j=1}^{n}\|a^{ij}(0)\|^2_{L^\infty_{\cF_0}(L^\infty(G))}\mathbb{E}\|\nabla(u_{0}\!-y_0)\|^2_{L^2(G)}.
\end{align*}

The following process is similar to the proof of Theorem \ref{reguest}.
 \end{proof}

\section{Numerical Approximation}

  Compared to deterministic parabolic equation, numerically solving the inverse problem of stochastic parabolic equation is more complicate and difficult. 
On one hand, the analytic solution of stochastic parabolic equation can not be explicitly expressed; on the other hand, the solution of stochastic parabolic equation is not differentiable with respect to the temporal  variable. Moreover, for many sharp method working well for deterministic problem, there are new essential difficulty. For example, when using the conjugate gradient method to the stochastic problem, the adjoint system is a backward stochastic parabolic equation. Thus, one has to solve a forward-backward stochastic parabolic equation numerically. The solution of this equation is three stochastic processes, whereas the last one with lower regularity, and thus is difficulty to be numerically solved. 
We  combine  conjugate gradient method and Picard type algorithm for forward-backward stochastic parabolic equation introduced in \cite{DP2016SISC} to solve \eqref{p1} numerically. 
To this end, we need to employ the adjoint equation of \eqref{p1}, which is a backward stochastic parabolic equation. As a result,  we assume that   $\left\{\mathcal{F}_{t}\right\}_{t \geqslant 0}$ is the natural filtration generated by $W(\cd)$.


\subsection{Conjugate gradient method  for the regularization method}
The conjugate gradient (CG) method is an iterative algorithm for the numerical solution of linear systems.  Thus can be used to numerically solve partial differential equations and unconstrained optimization problems such as energy minimization. For readers' convenience, we describe the conjugate gradient method for finding the solution $x$ which satisfies
$$
x=\operatorname{argmin}_{x\in \mathbb{R}^{n}} \left(\frac{1}{2} x^{\top} A x-b^{\top} x\right)=\operatorname{argmin}_{x\in \mathbb{R}^{n}}f(x).
$$
The numerical approximation of the above system given by the CG method after $k+1$ steps is
\begin{equation}\label{2.1-eq1}
x_{k+1}=x_0+\sum_{j=0}^{k}\alpha_j p_j=x_k+\alpha_k p_k.
\end{equation}
In \eqref{2.1-eq1},  $x_0$ is the initial guess of the solution, 
$$
p_{k}=r_{k}-\sum_{i=1}^{k-1} \frac{\left\langle r_{k}, A p_{i}\right\rangle}{\left\langle p_{i}, A p_{i}\right\rangle} p_{i}.
$$
for $r_k\triangleq b-A x_{k}$, and
the step size $\alpha_k$ can be chosen to minimize $f(x_k+\alpha_k p_k)$.

Now we turn to our problem. 
As we described in Section 4, our purpose is to minimize the functional
\begin{equation}\label{Tikhonov functional}
J_\alpha(y_0)= \mathbb{E} \int_G\big(\cA y_0-u_T^\delta(x)\big)^2\text{d}x +\alpha||y_0||_{H^2(G)}^2,\q y_0\in H^2(G)\cap H_0^1(G).
\end{equation}

Let $\{y_0^k\}_{k=1}^\infty$ be the approximation sequence generated by the conjugate gradient method to $\bar y_{0}$, i.e.,
\begin{equation} 
y_0^{k+1} = y_0^{k}+\beta_kd_k,\ k=0,1,2,\cdots
\label{uk}
\end{equation}
In \eqref{uk}, $k$ is the iteration index, $\beta_k$ and $d_k$ are the step size and descent direction in the $k$-th iteration given as follows:
\begin{equation}
\beta_k=-\frac{\ds\mathbb{E} \int_G(\cA y_0^k-u_T^\delta(x))\cA d_k\text{d}x +\alpha \langle y_0^k,d_k\rangle_{H^2(G)}}{\ds\mathbb{E} \int_G(\cA d_k)^2\text{d}x +\alpha |d_k|_{H^2(G)}^2}.
\label{betak}
\end{equation}
\begin{equation}
d_0=0,\q d_k=-J_\alpha'(y_0^k)+\gamma_kd_{k-1} \mbox{ for } k=1,2,\cds,
\label{dk}
\end{equation}
where $J_\alpha'$ is the G\^ateaux derivative of the functional \eqref{Tikhonov functional} and $\gamma_k$ is the conjugate coefficient given inductively: 
\begin{equation}
\gamma_0=0,\q \gamma_k=\[\mE\int_G(J_\alpha'(y_0^{k-1}))\text{d}x\]^{-1} \mE\int_G(J_\alpha'(y_0^k))^2\text{d}x, \q k=1,2,\cdots 
\label{gammak}
\end{equation}

By the above formulae, we should compute the sensitivity term $\cA d_k$ and the G\^ateaux derivative $J_\alpha'$. This is done in the new two subsections.

\subsection{The sensitivity problem and the adjoint problem}

In this subsection, we  compute the sensitivity term $\cA d_k$. For simplicity, we use $|\D v|_{L^2(G)}$ as the norm of $H^2(G)\cap H_0^1(G)$ for $v\in H^2(G)\cap H_0^1(G)$.

For any $v_0\in H^2(G)\cap H_0^1(G)$,
\begin{equation}
\begin{aligned}
  & J_\alpha(y_0+\delta v_0)-J_\alpha(y_0) \\
= & \mathbb{E}\big(2\langle \cA y_0-u_T^\delta,\cA\delta v_0\rangle _{L^2(G)}+ |\cA\delta v_0|_{L^2(G)}^2\big)+2\alpha\langle y_0,\delta v_0\rangle_{H^2(G)} +\alpha |\delta v_0|_{H^2(G)}^2\\
= & \mathbb{E}\big(2\langle \cA y_0-u_T^\delta,\cA\delta v_0\rangle _{L^2(G)}+ |\cA\delta v_0|_{L^2(G)}^2\big)+2\alpha\langle \D y_0,\delta \D v_0\rangle_{L^2(G)} +\alpha |\delta\D v_0|_{L^2(G)}^2.
\end{aligned}
\end{equation}
Since $\cA$ is a linearly continuous operator, we have
\begin{equation}
\begin{aligned}
&\lim_{\delta\to0}\frac{J_\alpha(y_0+\delta v_0)-J_\alpha(y_0)}{\delta} \\
=&  2\mathbb{E}\langle \cA y_0-u_T^\delta,\cA v_0\rangle _{L^2(G)} +2\alpha\langle \D y_0,\D v_0\rangle_{H^2(G)}\\
=& 2\mathbb{E}\langle (-\D)^{-1}(\cA y_0-u_T^\delta),(-\D)^{-1}\cA v_0\rangle _{H^2(G)} +2\alpha\langle  y_0, v_0\rangle_{H^2(G)}.
\end{aligned}
\end{equation}
Then the G\^{a}teaux derivative of $J$ is
\begin{equation}
J_\alpha'(y_0)=(-\D)^{-2}\cA^*(\cA y_0-u_T^\delta)+\alpha y_0.
\label{J'}
\end{equation}

Denote $V=y(\cd,\bar y_{0})-y(\cd,y_0)$, then the following sensitivity problem is obtained
\begin{equation}
\begin{cases}
\text{d}V-\sum\limits_{i,j=1}^n(a^{ij}V_i)_j\text{d}t=\left(b_1\cd\nabla V+b_2V\right)\text{d}t+b_3VdW(t) & \text{in}\ Q,\\
V=0 & \text{on}\ \Sigma, \\
V(0)=\bar y_{0}-y_0 & \text{in}\ G.
\end{cases}
\label{sensitivity problem}
\end{equation}
The adjoint problem of (\ref{sensitivity problem}) is
\begin{equation}
\begin{cases}
\text{d}Y=\[-\sum\limits_{i,j=1}^n(a^{ij}Y_i)_j-\nabla\cdot(b_1 Y)-b_2 Y-b_3  Z \]\text{d}t+ZdW(t) & \text{in}\ Q,\\
Y=0 & \text{on}\ \Sigma, \\
Y(T)=\cA y_0-u_T^\delta & \text{in}\ G.
\end{cases}
\label{adjoint problem}
\end{equation}
The existence and uniqueness of the strong solution of this adjoint problem is proved in \cite{DT2012}.


We summarise the conjugate gradient method to solve the minimization problem (\ref{Tikhonov functional}) in Algorithm 1.

\begin{table}[htbp]
\centering
\begin{tabular}{ll}
\toprule
&\textbf{Algorithm 1} Conjugate gradient algorithm for the inverse problem \bf{IPD}\\
\midrule
1: & Choose an initial guess $y_0^0$. Set $k=0$. \\
2: & Solve the initial boundary problem (\ref{p1}) with $y_0^k$, \\
   & and determine the residual $r_k=y^k(T)-u_T^\delta$. \\
3: & Solve the adjoint problem (\ref{adjoint problem}) and determine $J_\alpha'$ by (\ref{J'}). \\
4: & Calculate the conjugate coefficient $\gamma_k$ by \eqref{gammak} and the descent direction $d_k$ by \eqref{dk}. \\
5: & Solve the sensitivity problem (\ref{sensitivity problem}) for $Ad_k$ with $V(0)=d_k$. \\
6: & Calculate the stepsize $\beta_k$ by (\ref{betak}). \\
7: & Compute a new estimate, $y_0^{k+1}$, with (\ref{uk}). \\
8: & Interrupt the iterative procedure if the stopping criterion is satisfied.\\ 
   & Otherwise, increase k by 1 and go back to Step 2.\\
\bottomrule
\end{tabular}
\end{table}

\subsection{Picard type algorithm for forward-backward stochastic parabolic equation}


In this subsection, borrowing some idea in \cite{DP2016SISC}, we present fully implementable algorithms to simulate the equations (\ref{p1}), (\ref{sensitivity problem}) and (\ref{adjoint problem}). 

For simplicity of notations, we suppose $(a^{ij})_{1\leq i,j\leq n}$ be the identity matrix and $f=g=0$ in (\ref{FBSPDE}) throughout this subsection. 

We write (\ref{p1}) and (\ref{adjoint problem}) together as follow and called them the forward-backward stochastic heat equation (FBSPDE)
\begin{equation}
\begin{cases}
\text{d}U=\big(\D U + b_1\cd\nabla U +b_2U\big)\text{d}t+ b_3U dW(t) & \text{in}\ Q,\\
\text{d}Y=-\big[\D Y +\div(b_1 Y)+b_2 Y+b_3 Z\big]\text{d}t+ZdW(t) & \text{in}\ Q,\\
U(0)=u_0 & \text{in}\ G,\\
Y(T)=\cA y_0-u_T^\delta & \text{in}\ G.
\end{cases}
\label{FBSPDE}
\end{equation}
%


In order to discretize above equations, we introduce some notation here. Let $\mathcal{M}_h$ be a regular mesh of $G\subset\mathbb{R}^n$ into element domains $K$ with a maximum mesh size $h\triangleq\max\{diam(K)|K\in\mathcal{M}_h\}$. For each $K\in\mathcal{M}_h$, let $\mathcal{P}_j(K)$ denote the set of all polynomials of degree less than or equal to $j$ on $K$, and we define the finite element space $\mathbb{U}_h\subset H_0^{1}$ by
\begin{equation*}
	\mathbb{U}_h\triangleq\{\phi\in C^k(\bar{G})|\phi_{|K}\in\mathcal{P}_j(K),\ \forall K\in\mathcal{M}_h\}.
\end{equation*}
The $\mathbb{L}^2$-projection $\Pi_h: \mathbb{L}^2\to \mathbb{U}_h$ is defined by $(\Pi_h\xi-\xi, \phi_h)=0$ for all $\phi_h\in\mathbb{U}_h$.

The spatial discretization of (\ref{FBSPDE}) is as follows:

For all $t\in[0,T]$, there holds $\mathbb{P}$-a.s.
\begin{equation}\label{2.3-eq1}
\begin{aligned}
&\int_G U_h(t) \phi_h\text{d}x+ \int_0^t\int_G \nabla U_h \cd \nabla\phi_h \text{d}x\text{d}s\\
=&\int_G u_0 \phi_h\text{d}x-\int_0^t\int_G [U_h,\div(b_1\phi_h)- b_2U_h \phi_h ]\text{d}x\text{d}s+\int_0^t\int_G b_3U_h \phi_h \text{d}x\text{d}W(s),
\end{aligned}
\end{equation}
and
\begin{equation}\label{2.3-eq2}
\begin{aligned}
\int_G Y_h(t) \phi_h \text{d}x= & \int_G (\cA y_0-u_T^\delta)\phi_h \text{d}x- \int_t^T\int_G (\nabla Y_h\cd\nabla\phi_h + Y_hb_1\cd\nabla\phi_h ) \text{d}x\text{d}s\\
& +\int_t^T\int_G (b_2  Y_h \phi_h + b_3  Z_h \phi_h)\text{d}x\text{d}s-\int_t^T\int_G  Z_h \phi_h\text{d}x\text{d}W(s).
\end{aligned}
\end{equation}
For every fixed $h>0$, there exsit a unique  solution $(U_h,Y_h,Z_h)\in L_\dbF^2(\Omega,C([0,T];\mathbb{U}_h))\times L_\dbF^2(\Omega,C([0,T];\mathbb{U}_h)) \times L_\dbF^2(\Omega,L^2([0,T];\mathbb{U}_h))$ to \eqref{2.3-eq1} and \eqref{2.3-eq2}.

Denote the time discretization of $(U_h,Y_h,Z_h)$  by $\{(U_h^m(t),Y_h^m(t),Z_h^m(t))|m=0,\cdots,\dbM\}$. Let $k=t_{m+1}-t_m$ be the uniform time step for a net $\{t_m\}_{m=0}^\mathbb{M}$ which covers $[0,T]$. Let $\Delta_mW=W(t_m)-W(t_{m-1})$. Then we can simulate $\{(U_h^m(t),Y_h^m(t),Z_h^m(t))|m=0,\cdots,\dbM\}$ as follow:

(i) Simulate $U_h^0=\Pi_h(y_0)$.

(ii) For each $m=0,\cdots,\dbM-1$, simulate $U_h^m$ such that for each $ \phi_h\in\mathbb{U}_h$,
$$
\begin{aligned}
&\int_G U_h^{m+1} \phi_h \text{d}x +k\int_G \nabla U^{m+1}_h\cd\nabla\phi_h \text{d}x\\
= & \int_G U_h^m \phi_h \text{d}x -k\int_G \big[U_h^m \div(b_1\phi_h)- b_2U_h^m \phi_h\big] \text{d}x+  \D_{m+1}W \int_G b_3U_h^m \phi_h \text{d}x.
\end{aligned}
$$

(iii) Simulate $Y_h^\dbM=U_h^{\mathbb{M}}-\Pi_h(u_h^{\dbM})$.

(iv) For each $m=\dbM-1,\cdots,0$, simulate $Y_h^m$ and $Z_h^m$ such that for each $  \phi_h\in\mathbb{U}_h$,
$$
\int_G Z_h^m \phi_h \text{d}x=\frac{1}{k}\mathbb{E}\left[\D_{m+1}W \int_G Y_h^{m+1} \phi_h \text{d}x\Big|\cF_{t_m}\right],
$$
and
$$
\begin{aligned}
& \int_G Y_h^m \phi_h\text{d}x + k\int_G \nabla Y^{m}_h \cd\nabla \phi_h\text{d}x+k \int_G (Y_h^m b_1 \cd \nabla\phi_h - b_2 Y_h^m \phi_h)\text{d}x\\
&=\mathbb{E}\left[\int_G Y_h^{m+1} \phi_h\text{d}x\Big|\cF_{t_m}\right]+k\int_G b_3Z^m_h \phi_h\text{d}x.
\end{aligned}
$$

For $l=1,\dots,L$, let $\phi_h^l\in\mathbb{U}_h$ be basis functions of $\mathbb{U}_h$. Let $U_h(x,t)=\sum\limits_{l=1}^L{u_h^l(t)\phi_h^l(x)}$, $Y_h(x,t)=\sum\limits_{l=1}^L{y_h^l(t)\phi_h^l(x)}$ and $Z_h(x,t)=\sum\limits_{l=1}^L{z_h^l(t)\phi_h^l(x)}$ with coefficient vectors $\vec{U}_h, \vec{Y}_h,$ $ \vec{Z}_h\in\mathbb{R}^L$. Denote by $\St$ the stiffness matrix  consisting of entries $\int_G \nabla\phi_h^l\cd\nabla\phi_h^w \text{d}x$, where $\phi_h^l,\phi_h^w\in\mathbb{U}_h$ are basis functions of $\mathbb{U}_h$; by $\Mg$   the matirx consisting of entries $\int_G \phi_h^l b_1 \cd\nabla\phi_h^w \text{d}x$; by $\Md$  the matrix consisting of entries   $\int_G \nabla\phi_h^l\cd \nabla\cdot(b_1\phi_h^w)\text{d}x$ and by \textbf{Mass} the mass matrices consisting of entries $\int_G \phi_h^l \phi_h^w\text{d}x$. Then the semi-discretization above can be restated as an algebraic problem:
\begin{equation}
\begin{aligned}
(\Ma+k\ \St)\ \vec{U}_h^{m+1}= & [-k\ \Md+(b_2k+1)\ \Ma]\ \vec{U}_h^m +b_3\ \Ma\ \vec{U}_h^j\D_{m+1}W,
\end{aligned}
\label{iter_U}
\end{equation}
\begin{equation}
\begin{aligned}
\Ma\ \vec{Z}_h^m=&\frac{1}{k}\mathbb{E}\big[\D_{m+1}W\ \Ma\ \vec{Y}_h^{m+1}|\mathcal{F}_{t_m}\big],
\end{aligned}
\label{iter_Z}
\end{equation}
and
\begin{equation}
\begin{aligned}
[(1-b_2 )\ \Ma+k\ \St+k\ \Mg]\ \vec{Y}_h^{m}=\mathbb{E}\big[\Ma\ \vec{Y}_h^{m+1}|\mathcal{F}_{t_m}\big]+kb_3  \Ma\ \vec{Z}_h^m,
\end{aligned}
\label{iter_Y}
\end{equation}
by
\begin{equation}
\vec{Y}_h^m=A_{Y^{m}}\vec{U}_h^{m}+\vec{V}^{m},
\label{expressed_Y}
\end{equation}
with (deterministic) $A_{Y^m}\in\mathbb{R}^{L\times L}$ and $\vec{V}^m\in\mathbb{R}^L$ such that $A_{Y^\dbM}$ is the $L$-dimensional identity matrix  and $\vec{V}^\dbM=-\vec{u}_T^\delta$ respectively. The purpose of introducing expectations into \eqref{iter_Z} and \eqref{iter_Y} is to eliminate the non-uniqueness of $(\vec{Y}_h^m,\vec{Z}_h^m)$. For the sake of brevity, we denote $\Au=-k\ \Md+(b_2k+1)\ \Ma$ and $\Ay=(1-b_2)\ \Ma+k\ \St+k\ \Mg$ below. 

Next, we will derive the recursive form of $A_{Y^m}$ and $\vec{V}^m$ ($m=\dbM-1,\dots,0$). Using (\ref{iter_U}) and (\ref{expressed_Y}) we can compute $\vec{Z}_h^{m}$ as follows:
\begin{equation}\label{2.3-eq4}
\begin{aligned}
\vec{Z}_h^m= & \frac{1}{k}\mathbb{E}\big[\D_{m+1}W\ A_{Y^{m+1}}\ \vec{U}_h^{m+1}+\D_{m+1}W\ \vec{V}_{Y^{m+1}}|\mathcal{F}_{t_m}\big] \\
= & \frac{1}{k}\mathbb{E}\big[\D_{m+1}W\ A_{Y^{m+1}}(\Ma+k\ \St)^{-1}(\Au\ \vec{U}_h^m +b_3\ \Ma\ \vec{U}_h^m\D_{m+1}W)|\mathcal{F}_{t_m}\big]\\
= & A_{Y^{m+1}}(\Ma+k\ \St)^{-1}b_3\ \Ma\ \vec{U}_h^m.
\end{aligned}
\end{equation}
Combining \eqref{2.3-eq4} and (\ref{iter_Y}), we see %
\begin{equation}
\begin{aligned}
\vec{Y}_h^m= & \mathbb{E}\big[\Ay^{-1}\Ma\ (A_{Y^{m+1}}\vec{U}_h^{m+1}+\vec{V}^{m+1})|\mathcal{F}_{t_m}\big] \\
& +kb_3 \Ay^{-1}\Ma\ A_{Y^{m+1}}(\Ma+k\St)^{-1}b_3\ \Ma\ \vec{U}_h^m \\
\triangleq & A_{Y^{m}}\vec{U}_h^{m}+\vec{V}^{m}.
\end{aligned}
\end{equation}
By (\ref{iter_U}), we find that
\begin{equation*}
\begin{aligned}
\vec{Y}_h^m= & \big[\Ay^{-1}\Ma\ A_{Y^{m+1}}(\Ma+k\ \St)^{-1}\Au \\
& +kb_3 \Ay^{-1}\Ma A_{Y^{m+1}}(\Ma+k\St)^{-1}b_3\Ma\big]\vec{U}_h^m +\Ay^{-1}\Ma\ \vec{V}^{m+1}.
\end{aligned}
\end{equation*}
After simple calculation and comparison, we can determine $A_{Y^m}$ and $\vec{V}^m$ ($m=\dbM-1,\dots,0$) by
\begin{equation}
\begin{aligned}
A_{Y^m}= & \Ay^{-1}\Ma\ A_{Y^{m+1}}(\Ma+k\ \St)^{-1}\Au \\
& +kb_3 \Ay^{-1}\Ma A_{Y^{m+1}}(\Ma+k\St)^{-1}b_3\Ma,
\end{aligned}
\label{AYj}
\end{equation}
and
\begin{equation}
\begin{aligned}
\vec{V}^{m}= & \Ay^{-1}\Ma\ \vec{V}^{m+1}.
\end{aligned}
\label{Vj}
\end{equation}



\subsection{Numerical Examples}

In this subsection, we assume that the data $u_T^\delta=u_T+\delta\Vert{u_T}\Vert_\infty*2(rand(size(u_T))-0.5)$, where $\delta$ is the tolerated noise level and $2(rand(size(u_T))-0.5)$ generates random numbers uniformly distributed between $[-1,1]$.

In order to compare the numerical accuracy, we choose some extra test points to compute the root mean square error (RMSE):
\begin{equation}
RMSE=\Vert{u(\cdot,0)-u_c(\cdot,0)}\Vert_{l^2}=\sqrt{\frac{1}{N}\sum\limits_{i=1}^N(u(x_i,0)-u_c(x_i,0))^2},
\end{equation}
\begin{equation}
rmse=\frac{\Vert{u(\cdot,0)-u_c(\cdot,0)}\Vert_{l^2}}{\Vert{u_c(\cdot,0)}\Vert_{l^2}}=\sqrt{\frac{\sum\limits_{i=1}^N(u(x_i,0)-u_c(x_i,0))^2}{\sum\limits_{i=1}^Nu(x_i,0)^2}},
\end{equation}
where $u$ and $u_c$ are the exact and computational solutions of the problem, respectively. Here, $\{x_i\}_{i=1}^N$ is the vertices of our mesh, which are uniformly distributed in $G$.

Consider the following two examples:
\begin{example}\label{ex1}
Let $G=[0,1]$ and $T=1$. The initial value $u(x,0)=4x(1-x)$. The coefficients are $b_1=0$,  $b_2=0$ and $b_3=0.1$.
\end{example}
\begin{example}\label{ex2}
Let $G=[0,1]$ and $T=1$.  The initial value 
\begin{equation*}
u(x,0)=
\begin{cases}
2x, & x\in[0,0.5],\\
2-2x, & x\in(0.5,1].
\end{cases}
\end{equation*}
The coefficients are $b_1=0$,  $b_2=0$ and $b_3=0.1$.
\end{example}
Let the spatial size $h=1/20$,  and the temporal stepsize $k\le h^2$. 
In the computation, we set initial guess $u_0^0=0$ in Examples \ref{ex1} and \ref{ex2} and we simulate  at different noise levels $\delta=0,0.004,0.02,0.05,0.1$.  
Similar to the backward problem of deterministic  parabolic problem, the solution at $t=0$ is more difficult to retrieve than at $t\in(0,T)$. Thus, we only illustrate the numerical results at $t=0$ for Example \ref{ex1}. 

\begin{figure}[htbp]
\centering
\subfigure{
\includegraphics[width=6.5cm]{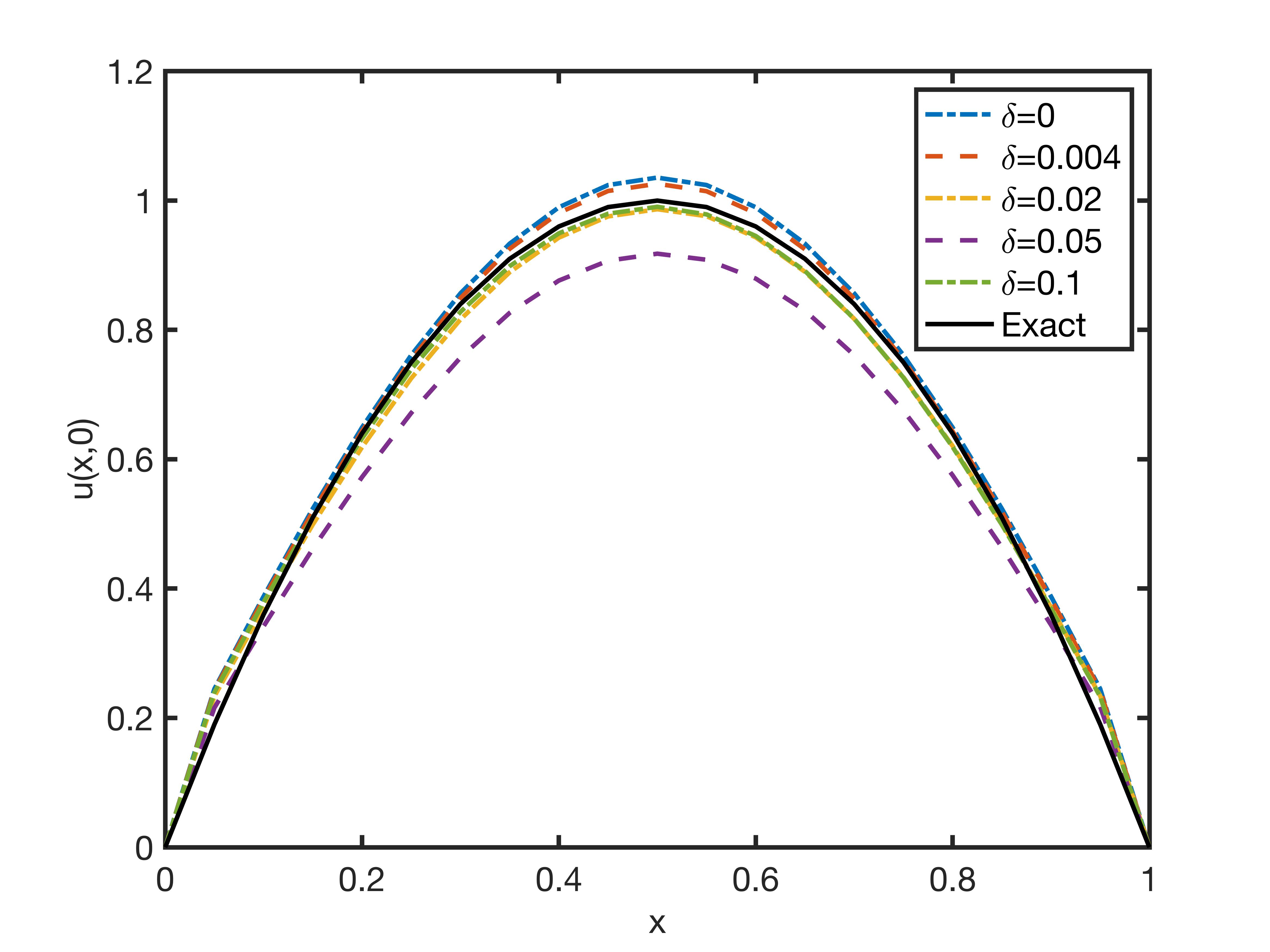}
}
\quad
\subfigure{
\includegraphics[width=6.5cm]{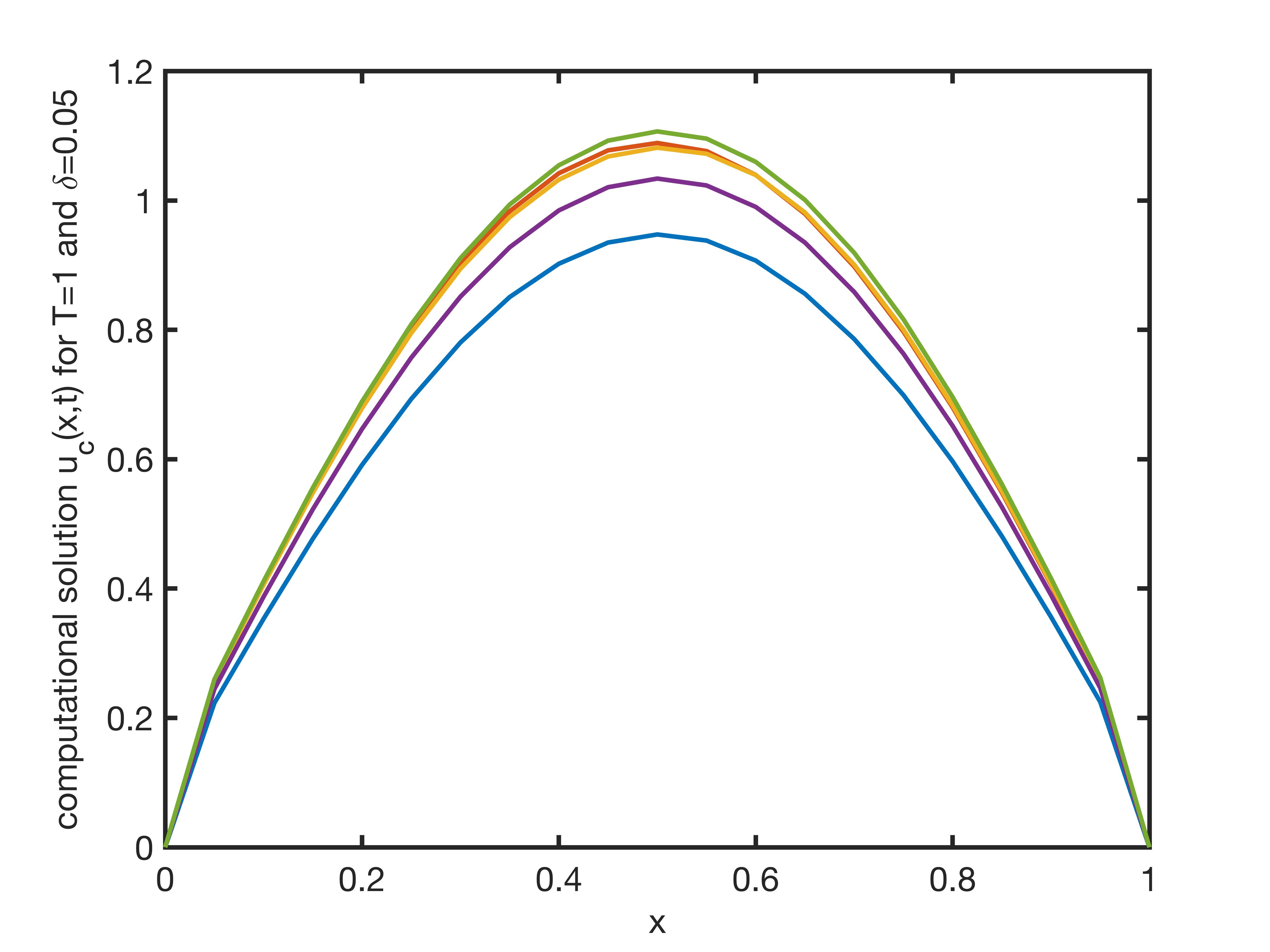}
}
\caption{(a) exact solution $u(x,0)$ and the computational approximations with different noise levels  $\delta=0, 0.004, 0.02, 0.05, 0.1$; (b) computational solution with $T=1$ and $\delta=0.05$.}
\label{results_1D}
\end{figure}

Noting that, due to effect of the stochastic term, the numerical solutions vary even at the same noisy level. This can be illustrated in Fig.\ref{results_1D}(b). 

\begin{table}[htbp]
\centering
\setlength{\tabcolsep}{6mm}
\begin{tabular}{cccccc}
\toprule
$T$ & $\delta=0$&$\delta=4\times10^{-3}$ & $\delta=2\times10^{-2}$ & $\delta=5\times10^{-2}$ & $\delta=10^{-1}$\\
\midrule
	$0.5$ & $0.1317$ & $0.1366$ & $0.1365$ & $0.1361$ & $0.1332$ \\
				& $0.0767$ & $0.0769$ & $0.0775$ & $0.0769$ & $0.0750$ \\
				\\
	$1$   & $0.1482$ & $0.1445$ & $0.1515$ & $0.1468$ & $0.1465$ \\
				 & $0.0830$ & $0.0815$ & $0.0854$ & $0.0817$ & $0.0829$ \\
				\\
	$1.2$  & $0.1558$ & $0.1492$ & $0.1549$ & $0.1569$ & $0.1534$ \\
				 & $0.0850$ & $0.0846$ & $0.0874$ & $0.0865$ & $0.0889$ \\
				\\
	$1.5$ & $0.1623$ & $0.1626$ & $0.1670$ & $0.1610$ & $0.1683$ \\
				 & $0.0907$ & $0.0928$ & $0.0878$ & $0.0921$ & $0.0981$ \\
\bottomrule
\end{tabular}
\caption{RMSE (first row) and rmse (second row) for $T=0.5,1.0,1.2,1.5$ for Example \ref{ex1} with different noise level $\delta$.}\label{RMSE}
\end{table}

%

We ran Example \ref{ex1} and \ref{ex2} 1000 times to evaluate the effect of the calculation. 
Table \ref{RMSE} shows the root mean square errors $RMSE$ and relative error $rmse$ of the numerical approximations for Example \ref{ex1} with $h=1/20$ and $k=1/800$. 
We set the measurement errors $\delta=0,0.004,0.02,0.05,0.1$ corresponding, respectively, to measurement errors of $0\%, 1\%, 5\%, 13\%$ and $25\%$ with respect to the largest value of $r^\delta$.  It can be seen that with the increase of noise, the calculation error does not increase sharply, which indicates that our algorithm has good robustness to noise.

\begin{figure}[htbp]
\centering
\subfigure{
\includegraphics[width=0.31\linewidth]{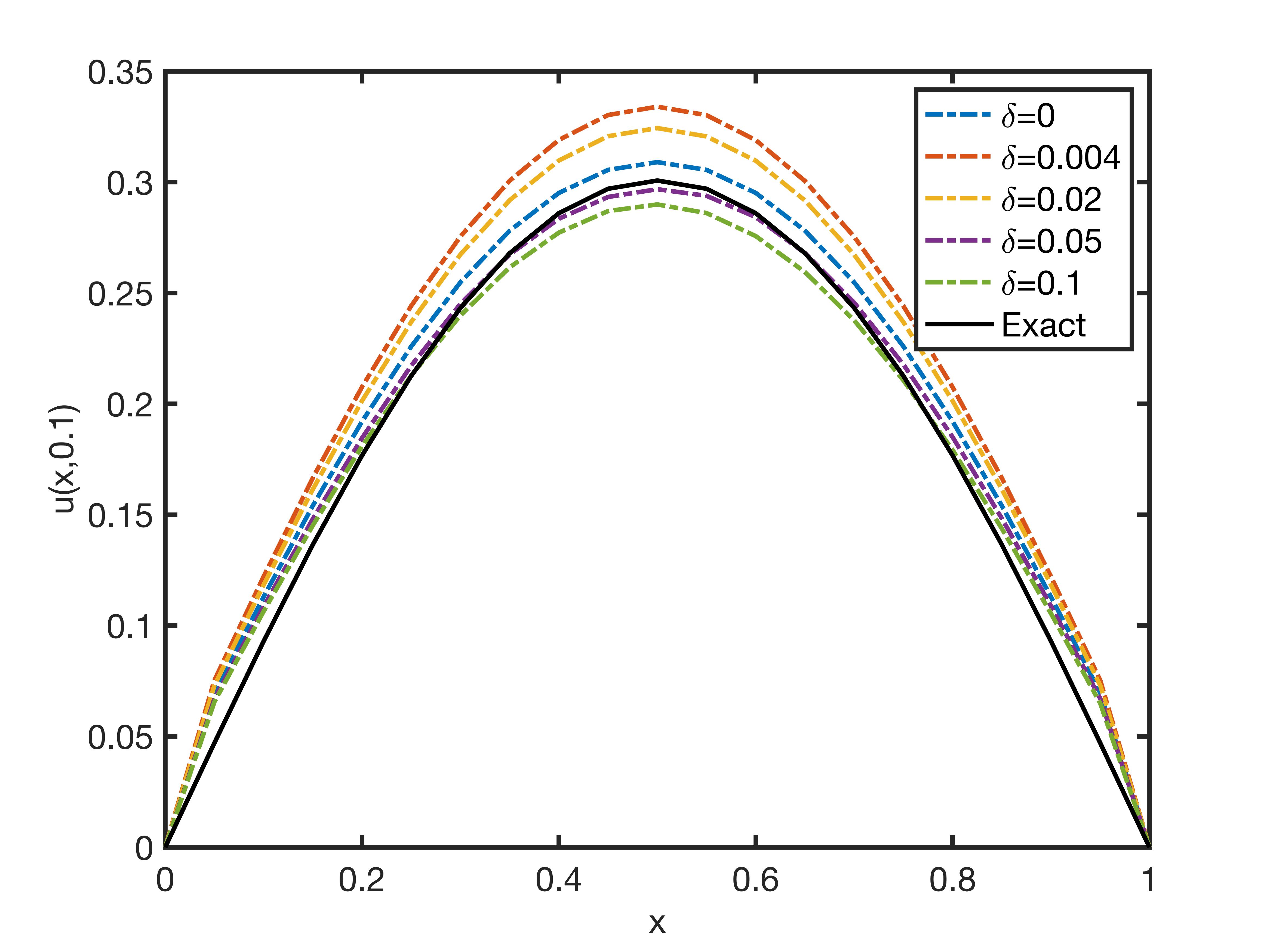}
}
\subfigure{
\includegraphics[width=0.31\linewidth]{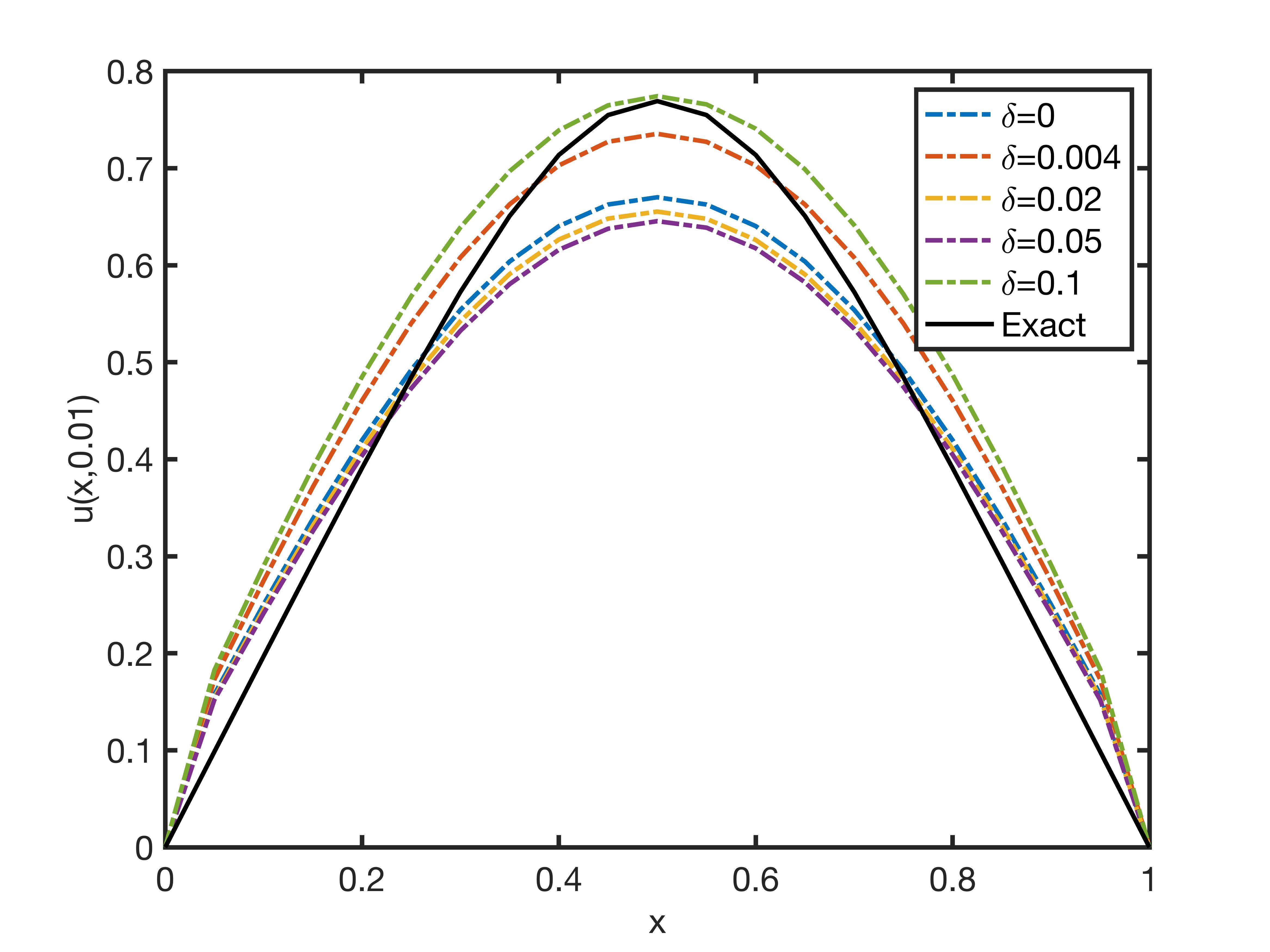}
}
\subfigure{
\includegraphics[width=0.31\linewidth]{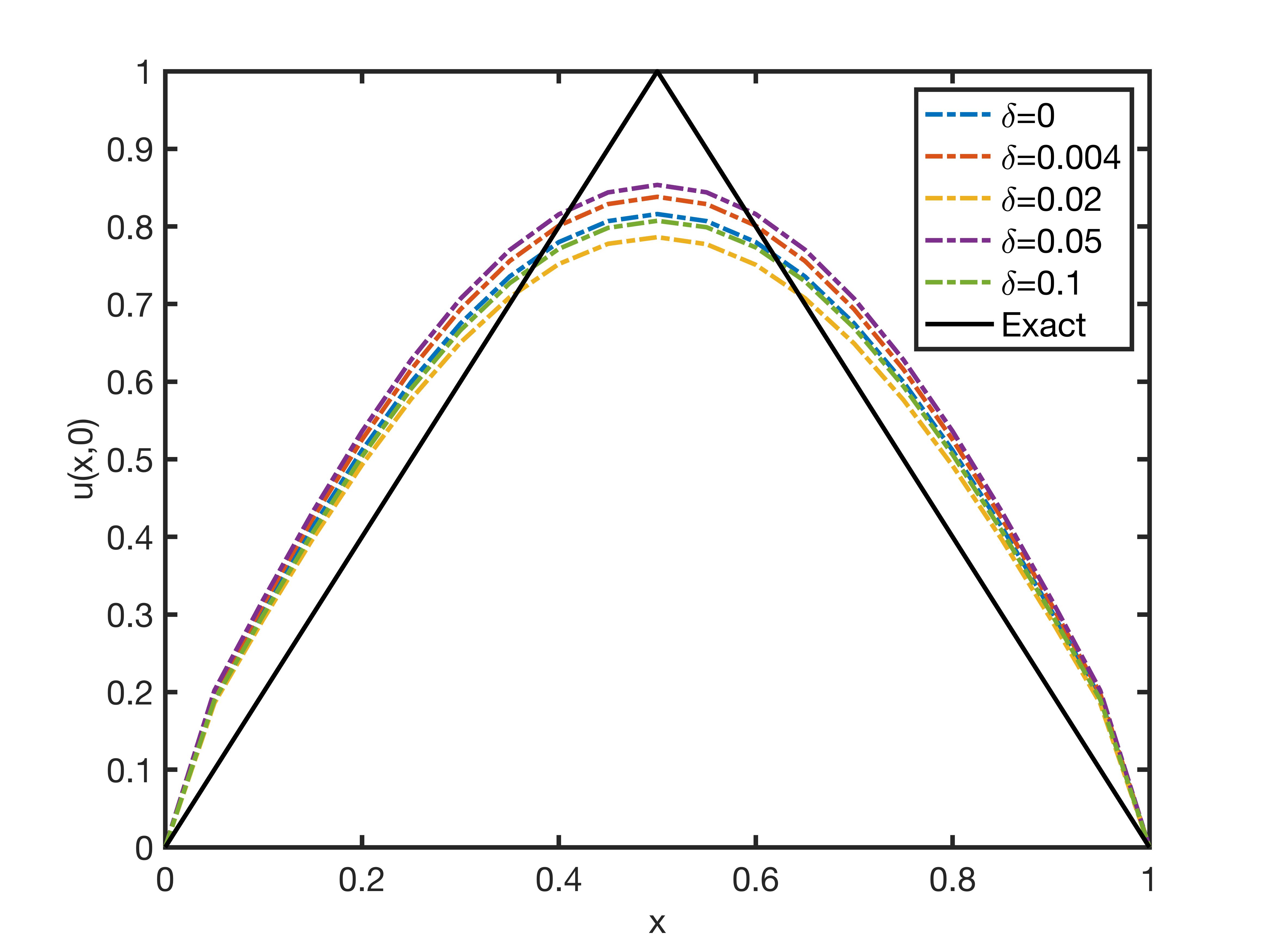}
}
\caption{computational solution at different time $t=0.1,0.01,0$.}
\end{figure}

%

We also verify the numerical algorithm by using an example in two dimensional spatial space. Figure \ref{results} shows that our method also work well for the case in a two dimensional spatial domain.
\begin{example}Let $G=[-1,1]\times [-1,1]$.  The initial value $u(\textbf{x},0)=\sin(\pi x_1)\cdot\sin(\pi x_2)$, where $\textbf{x}=(x_1,x_2)$.
\end{example}
\begin{figure}[htbp]
\centering
\subfigure[]{
\includegraphics[width=0.31\linewidth]{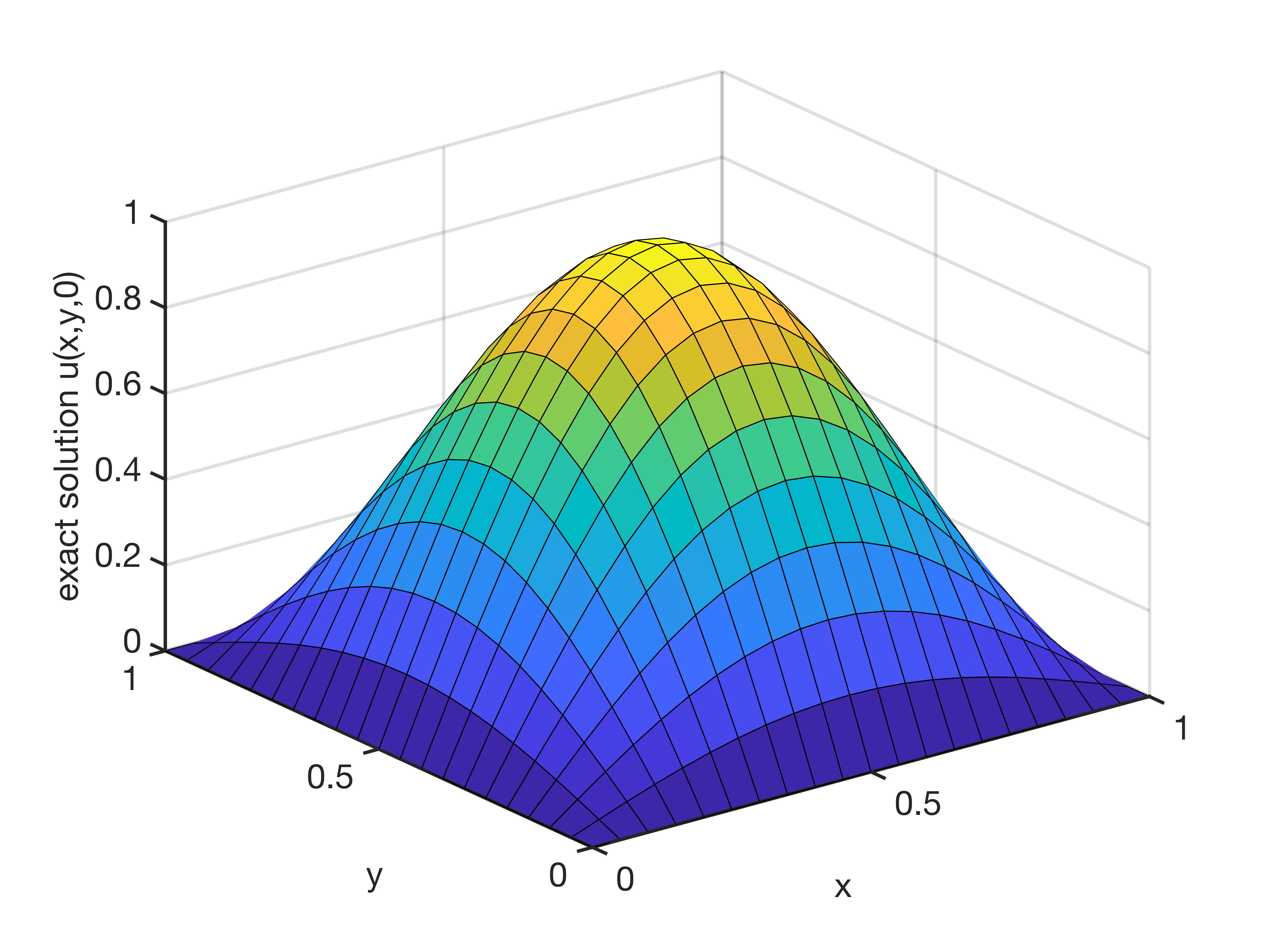}
}
\subfigure[]{
\includegraphics[width=0.31\linewidth]{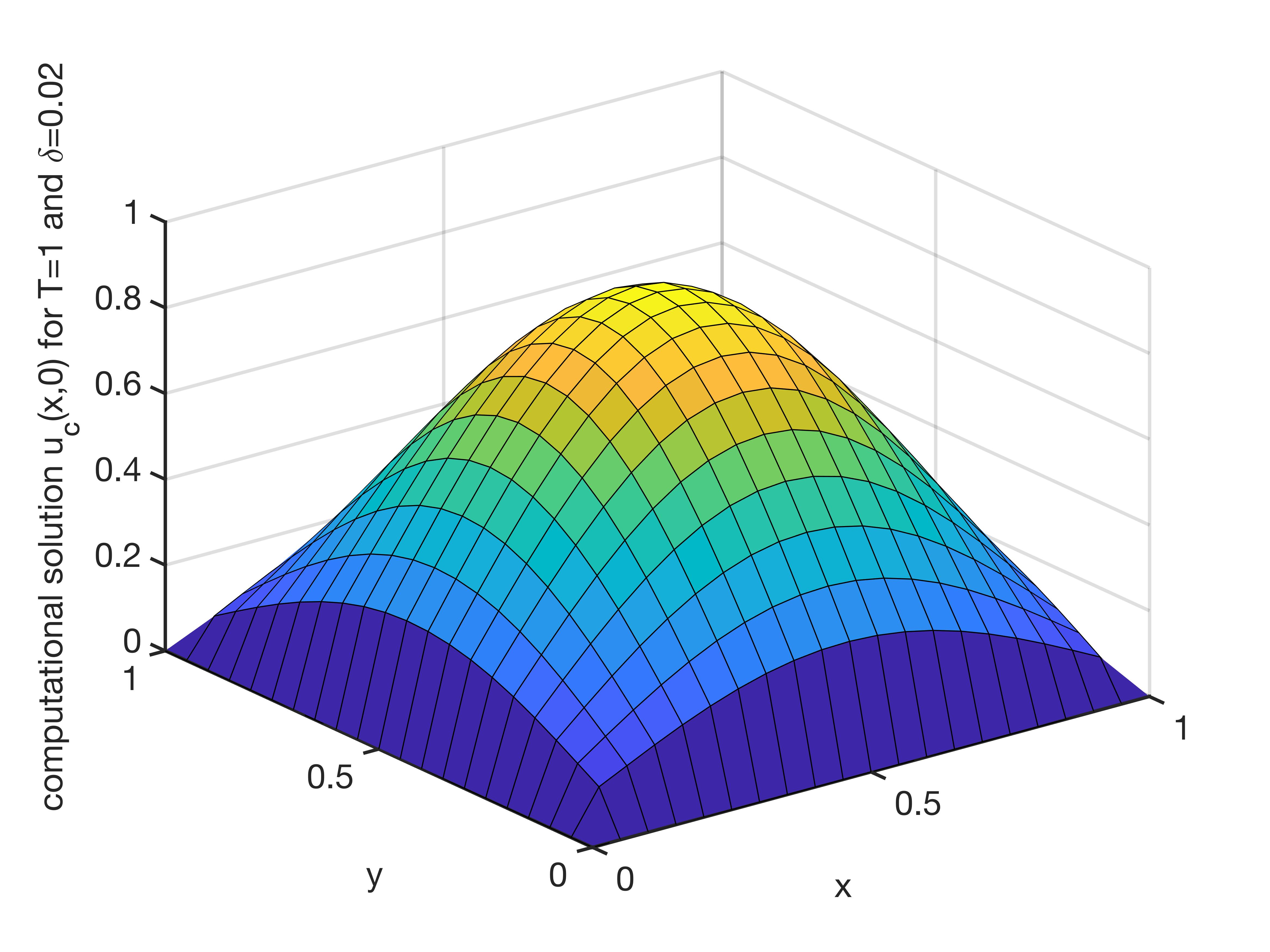}
}
\subfigure[]{
\includegraphics[width=0.31\linewidth]{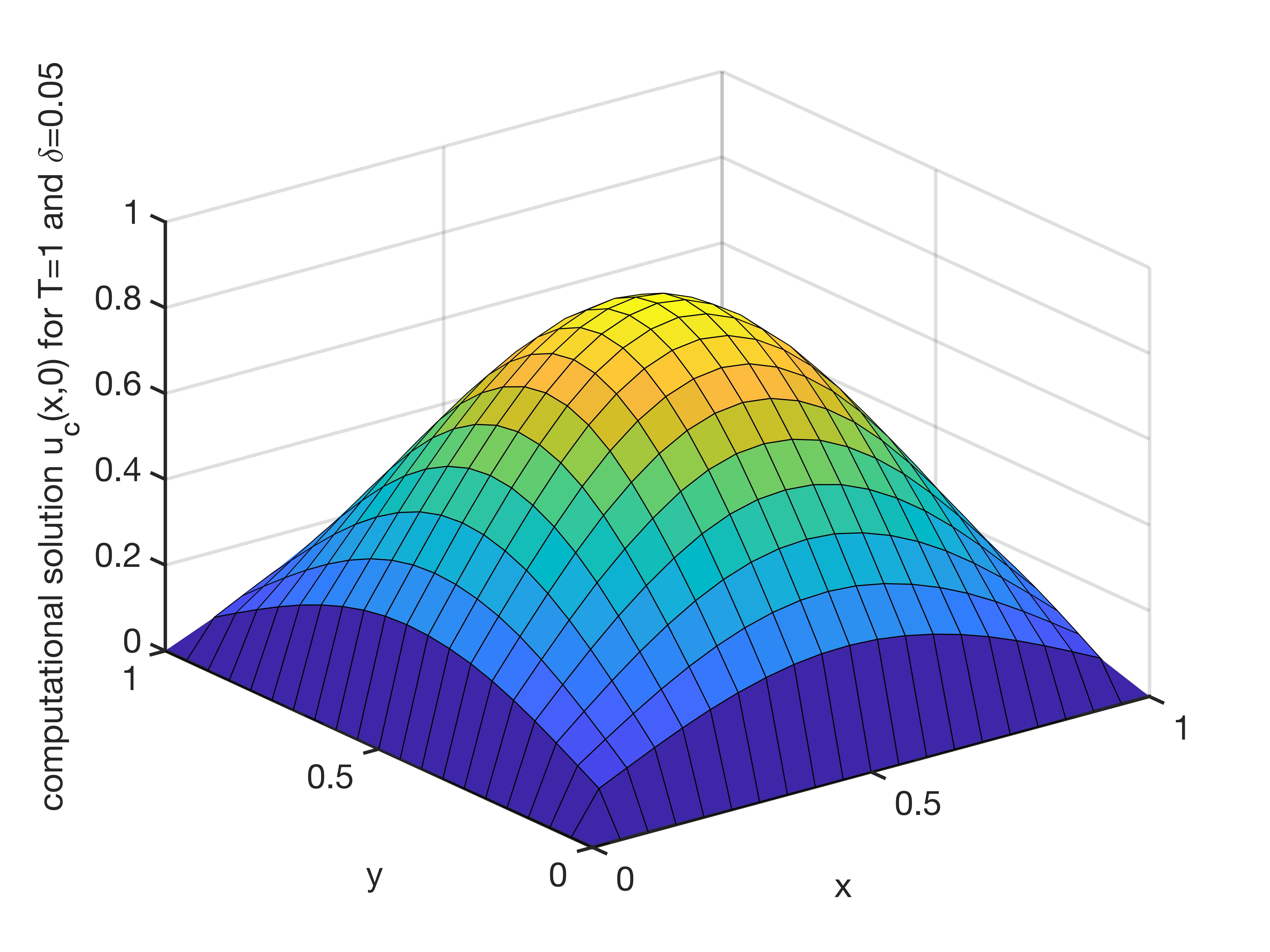}
}
\\
\subfigure[]{
\includegraphics[width=0.31\linewidth]{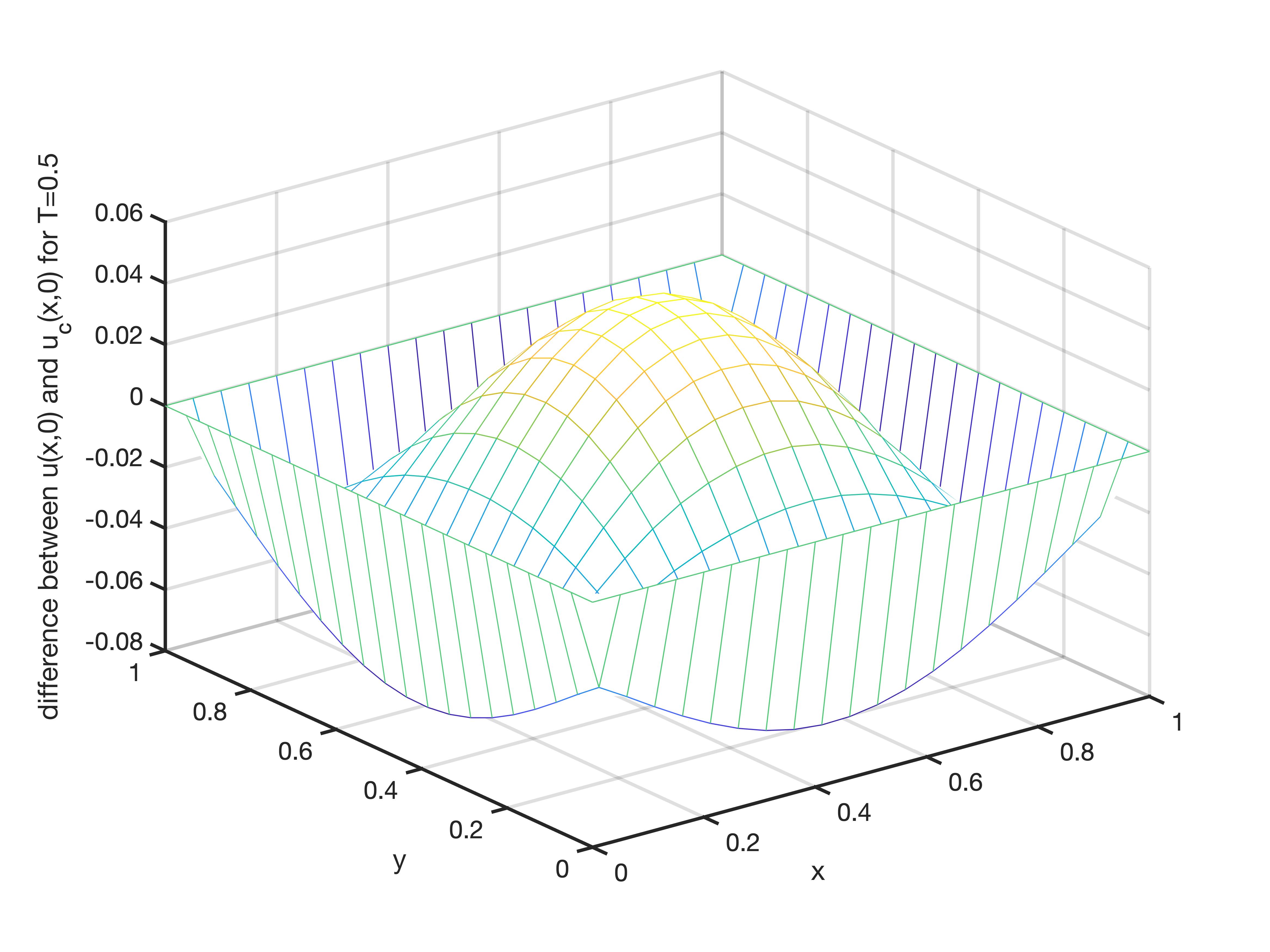}
}
\subfigure[]{
\includegraphics[width=0.31\linewidth]{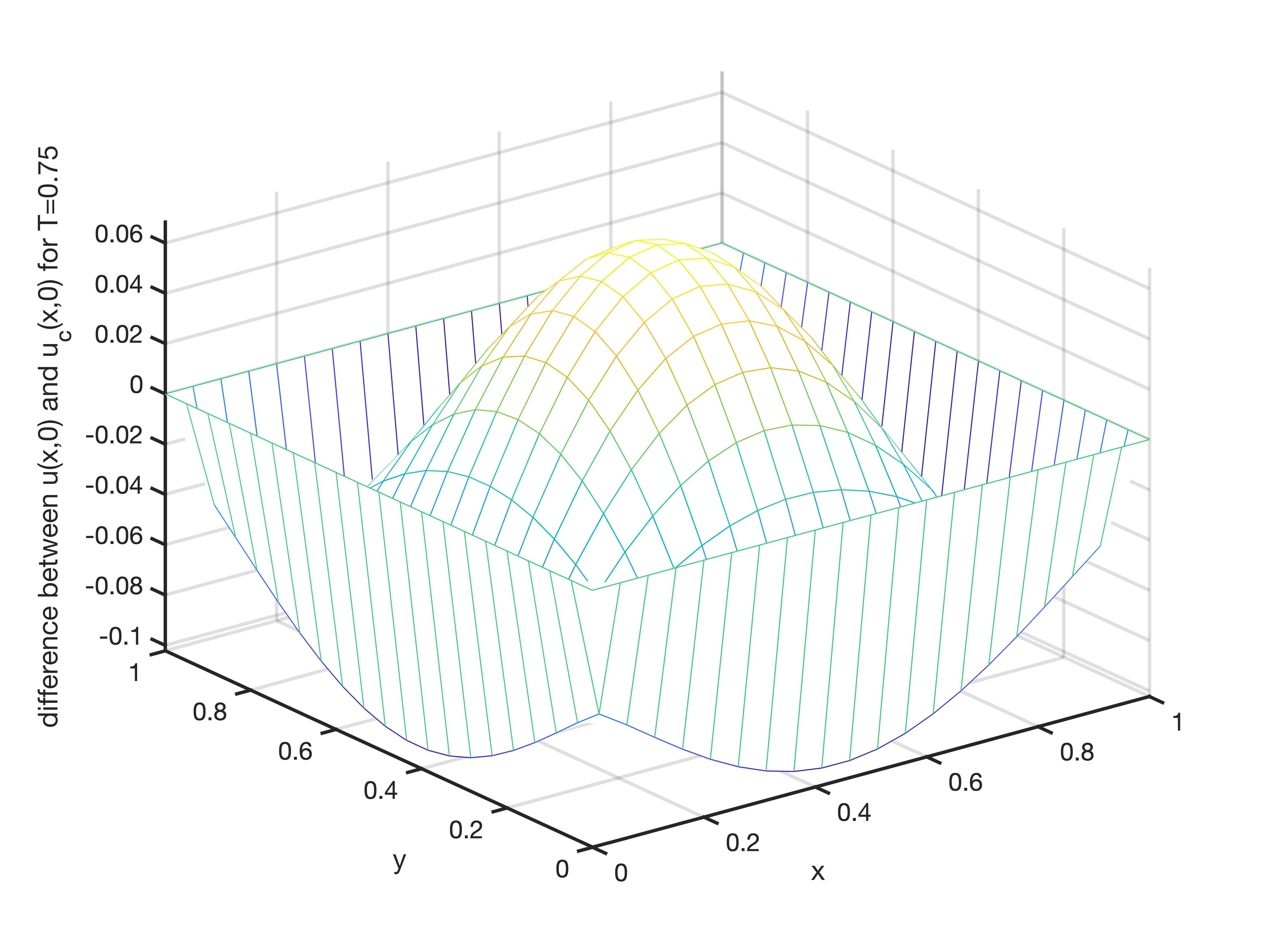}
}
\subfigure[]{
\includegraphics[width=0.31\linewidth]{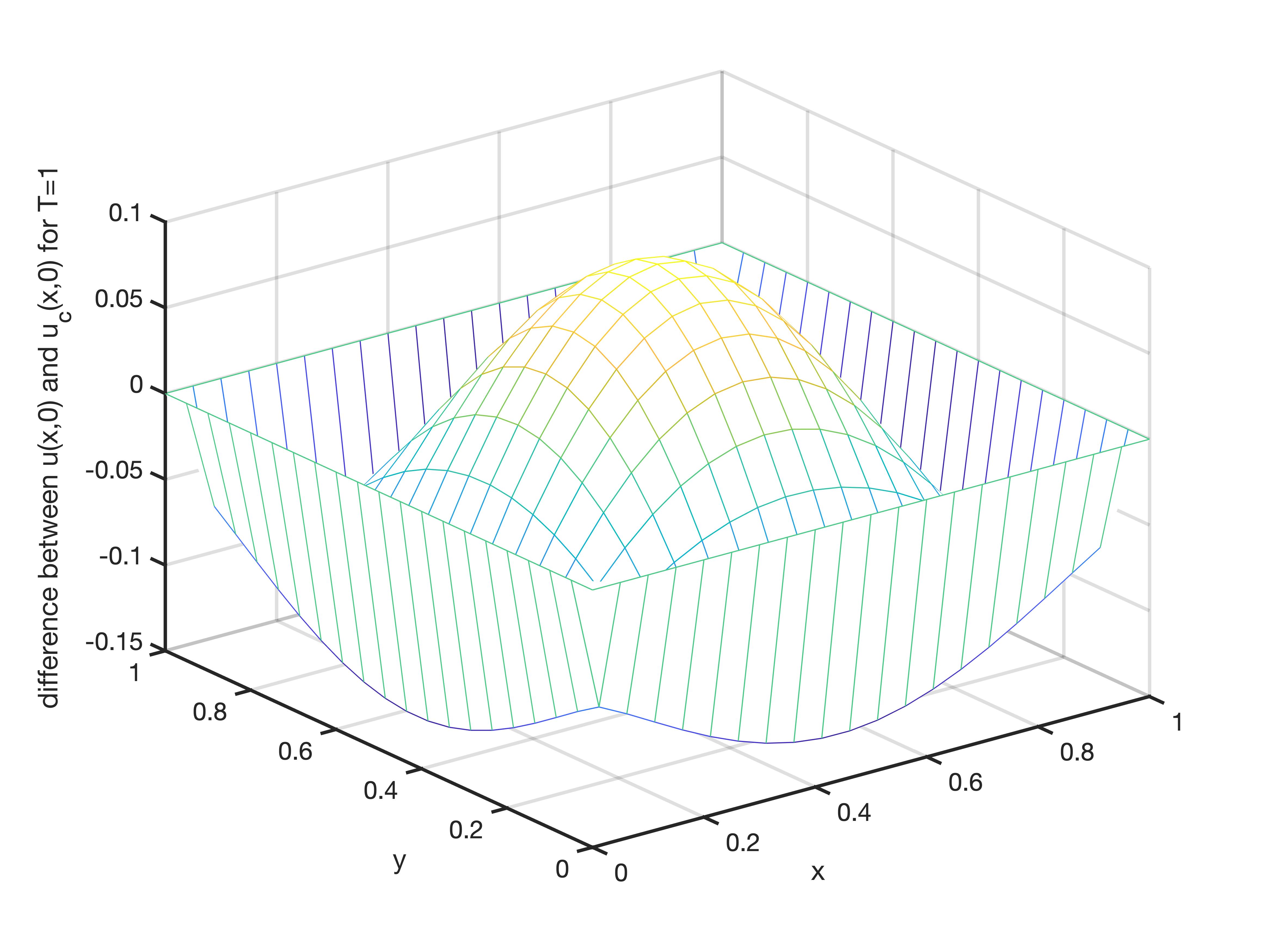}
}
\caption{(a) Exact solution. (b) Numerical solution with $\delta=0.02$. (c) Numerical solution with $\delta=0.05$. (d) Error with $\delta=0.02$ and $T=0.5$. (e) Error with $\delta=0.02$ and $T=0.75$. (f) Error with $\delta=0.02$ and $T=1$.}
\label{results}
\end{figure}

\section*{Acknowledgement}
The first author thanks the support of the NSFC (No. 12071061,11971093), the Applied Fundamental Research Program of Sichuan Province (No. 2020YJ0264), the Fundamental Research Funds for the Central Universities (No. ZYGX2019J094).

\bibliography{mybib}{}
\bibliographystyle{plain}

\end{document}